\newtheorem{theorem}{Theorem}[section]
\newtheorem{corollary}[theorem]{Corollary}
\newtheorem{proposition}[theorem]{Proposition}
\newtheorem{cl}{Claim}
\theoremstyle{remark}
\newtheorem{definition}[theorem]{\sc Definition}
\newtheorem{remark}[theorem]{\sc Remark}
\newtheorem{example}[theorem]{\sc Example}
\numberwithin{equation}{section}
\begin{document}

\title{Notes on toric Fano varieties associated to building sets}

\author{Yusuke Suyama}
\address{Department of Mathematics, Graduate School of Science, Osaka University,
1-1 Machikaneyama-cho, Toyonaka, Osaka 560-0043 JAPAN}
\email{y-suyama@cr.math.sci.osaka-u.ac.jp}


\keywords{toric geometry; Fano varieties; weak Fano varieties; building sets; nested sets;
graph associahedra; reflexive polytopes; graph cubeahedra; root systems.}

\date{\today}


\begin{abstract}
This article gives an overview of toric Fano and toric weak Fano varieties
associated to graphs and building sets.
We also study some properties of such toric Fano varieties
and discuss related topics.
\end{abstract}

\maketitle

\section{Introduction}

An $n$-dimensional {\it toric variety} is a normal algebraic variety $X$ over $\mathbb{C}$
containing the algebraic torus $(\mathbb{C}^*)^n$ as an open dense subset,
such that the natural action of $(\mathbb{C}^*)^n$ on itself extends to an action on $X$.
It is well known that the category of toric varieties is equivalent to the category of fans,
see \cite{oda88} for details.

A nonsingular projective algebraic variety is said to be {\it Fano}
if its anticanonical divisor is ample.
It is known that there are a finite number of isomorphism classes
of toric Fano varieties in any given dimension.
The classification of toric Fano varieties is a fundamental problem
and has been studied by many researchers.
In particular, {\O}bro \cite{obro07} gave an explicit algorithm
that classifies all toric Fano varieties for any dimension.
A nonsingular projective variety is said to be {\it weak Fano}
if its anticanonical divisor is nef and big.
Since the fan of a toric weak Fano variety determines a reflexive polytope,
there are a finite number of isomorphism classes
of toric weak Fano varieties of fixed dimension
as in the case of toric Fano varieties \cite{sato00}.
Sato \cite{sato02} classified toric weak Fano 3-folds
that are not Fano but are deformed to Fano under a small deformation,
which are called toric {\it weakened Fano} 3-folds.

There is a construction of nonsingular projective toric varieties from building sets,
which are formed by subsets of a finite set.
Since a finite simple graph determines a building set,
we can also associate to the graph a toric variety.
Toric varieties associated to building sets
were first studied by De Concini--Procesi \cite{deco95}
as smooth completions of hyperplane arrangement complements in a projective space
with normal crossing boundary divisor and they are now called {\it wonderful models}.
Building sets were originally defined as subspace arrangements
with some suitable properties.
In this article, we give an overview of toric Fano and toric weak Fano varieties
associated to finite simple graphs and building sets.
Furthermore, we provide some new results and discuss related topics.

This note is organized as follows:
In Section 2, we recall the construction of a toric variety from a building set
and a description of the intersection number of the anticanonical divisor
with a torus-invariant curve.
In Section 3, we survey characterizations of graphs and building sets
whose associated toric varieties are Fano or weak Fano.
In Section 4, we survey the relationship between the class of reflexive polytopes
determined by toric weak Fano varieties associated to building sets,
and that of reflexive polytopes associated to finite directed graphs.
Section 5 contains new results.
We study some properties of toric Fano varieties associated to building sets.
Finally, in Section 6, we discuss characterizations of graph cubeahedra and root systems
whose associated toric varieties are Fano or weak Fano.

\section{Building sets}

\subsection{Toric varieties associated to building sets}

\begin{definition}
A {\it building set} on a nonempty finite set $S$
is a finite set $B$ of nonempty subsets of $S$
satisfying the following conditions:
\begin{enumerate}
\item If $I, J \in B$ and $I \cap J \ne \emptyset$, then we have $I \cup J \in B$.
\item For every $i \in S$, we have $\{i\} \in B$.
\end{enumerate}
\end{definition}

We denote by $B_\mathrm{max}$
the set of all maximal (with respect to inclusion) elements of $B$.
An element of $B_\mathrm{max}$ is called a {\it $B$-component}
and $B$ is said to be {\it connected} if $B_\mathrm{max}=\{S\}$.
For a nonempty subset $C$ of $S$,
we call $B|_C=\{I \in B \mid I \subset C\}$ the {\it restriction} of $B$ to $C$.
The restriction $B|_C$ is a building set on $C$.
Note that $B|_C$ is connected if and only if $C \in B$.
For any building set $B$, we have $B=\bigsqcup_{C \in B_\mathrm{max}} B|_C$.
In particular, any building set is a disjoint union of connected building sets.

\begin{definition}\label{ys:nested}
Let $B$ be a building set.
A {\it nested set} of $B$ is a subset $N$ of $B \setminus B_\mathrm{max}$
satisfying the following conditions:
\begin{enumerate}
\item If $I, J \in N$, then we have either
$I \subset J$ or $J \subset I$ or $I \cap J=\emptyset$.
\item For any integer $k \geq 2$ and for any pairwise disjoint $I_1, \ldots, I_k \in N$,
the union $I_1 \cup \cdots \cup I_k$ is not in $B$.
\end{enumerate}
\end{definition}

The empty set is a nested set for any building set. 
The set $\mathcal{N}(B)$ of all nested sets of $B$ is called the {\it nested complex}.
The nested complex $\mathcal{N}(B)$
is in fact an abstract simplicial complex on $B \setminus B_\mathrm{max}$.
We denote by $\mathcal{N}(B)_\mathrm{max}$
the set of all maximal (with respect to inclusion) nested sets of $B$.

\begin{proposition}[{\cite[Proposition 4.1]{zele06}}]\label{ys:pure}
Let $B$ be a building set on $S$.
Then every maximal nested set of $B$ has the same cardinality $|S|-|B_\mathrm{max}|$.
In particular, the cardinality of every maximal nested set
of a connected building set on $S$ is $|S|-1$.
\end{proposition}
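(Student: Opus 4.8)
The plan is to reduce to the connected case and then show that every maximal nested set of a connected building set on $S$ has exactly $|S|-1$ elements. For the reduction, recall from the excerpt that $B=\bigsqcup_{C\in B_{\mathrm{max}}}B|_C$ and that distinct $B$-components are disjoint (if $C,C'\in B_{\mathrm{max}}$ met, the union axiom would give $C\cup C'\in B$, contradicting maximality). Hence any $N\in\mathcal{N}(B)$ splits as $N=\bigsqcup_{C}(N\cap B|_C)$, each $N\cap B|_C$ is a nested set of the connected building set $B|_C$, and conversely any such disjoint union is nested for $B$: the only potentially new violations of the second nested-set condition would involve pairwise disjoint sets meeting several components, but their union cannot lie in $B$ since every element of $B$ sits inside a single component. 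Thus $N$ is maximal if and only if every $N\cap B|_C$ is, and because $|S|-|B_{\mathrm{max}}|=\sum_{C}(|C|-1)$, it suffices to treat connected $B$, where $S\in B$ and the assertion becomes $|N|=|S|-1$.

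So assume $B$ is connected and let $N$ be a nested set. First I would attach to $N$ the poset $T=N\cup\{S\}$ ordered by inclusion. The first nested-set condition forces all elements of $T$ containing a fixed set to be pairwise intersecting, hence pairwise comparable, so $T$ is a tree rooted at $S$. For a node $I\in T$ with children $J_1,\dots,J_r$ (the maximal elements of $\{J\in N:J\subsetneq I\}$), put $\pi(I)=I\setminus(J_1\cup\cdots\cup J_r)$. Following each point of $S$ down the tree to the last node still containing it shows that the sets $\pi(I)$ partition $S$, so $|S|=\sum_{I\in T}|\pi(I)|$ while $|N|=|T|-1$. The first key step is that $|\pi(I)|\ge 1$ for every node: if $\pi(I)=\emptyset$ then $I=J_1\cup\cdots\cup J_r$ with necessarily $r\ge 2$ (one child would have to equal $I$), and since $I\in B$ and the $J_i\in N$ are pairwise disjoint, this contradicts the second nested-set condition. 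Consequently $|N|=|T|-1\le|S|-1$ for every nested set, with equality precisely when all $|\pi(I)|=1$.

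It remains to prove that maximality forces every $|\pi(I)|=1$; equivalently, that a nested set possessing a node with $|\pi(I)|\ge 2$ can be enlarged. This is the heart of the matter. Given distinct $p,q\in\pi(I)$, I would consider the family $\{K\in B:p\in K\subseteq I\setminus\{q\}\}$. By the union axiom this family is closed under unions (any two members meet at $p$), so it has a largest element $K^*$, which lies in $B\setminus B_{\mathrm{max}}$, is not already in $N$, and will be the set I adjoin. Checking the first nested-set condition for $N\cup\{K^*\}$ is quick: if $L\in N$ meets $K^*$, then $K^*\cup L$ again belongs to the family, whence $L\subseteq K^*$ by maximality of $K^*$; if not, $L$ and $K^*$ are disjoint.

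The main obstacle is verifying the second nested-set condition for $N\cup\{K^*\}$, i.e. that no union $W=K^*\cup M_1\cup\cdots\cup M_l$ of pairwise disjoint sets with $M_i\in N$ lies in $B$. The plan is to classify the $M_i$ by position relative to $I$: each is disjoint from $K^*\ni p$, so by the first nested-set condition it is either properly contained in $I$ (hence inside one of the children) or disjoint from $I$. If all the $M_i$ lie inside $I$, then $W$ belongs to the family defining $K^*$ yet strictly contains $K^*$, contradicting the maximality of $K^*$. If some $M_i$ is disjoint from $I$, then $I\ne S$, so $I\in N$; assuming $W\in B$, the union axiom gives $I\cup W\in B$ (as $I\cap W\ni p$), and $I\cup W$ equals the union of the pairwise disjoint members $I$ and the outside $M_i$ of $N$, at least two in number, contradicting the second nested-set condition for $N$ itself. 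In either case $N$ is not maximal. Hence a maximal nested set has all $|\pi(I)|=1$ and so cardinality $|S|-1$, which together with the reduction proves the proposition.
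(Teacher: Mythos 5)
Your proposal addresses a statement that the paper itself does not prove: Proposition \ref{ys:pure} is quoted directly from Zelevinsky \cite[Proposition 4.1]{zele06}, so there is no in-paper argument to compare against, and I can only assess your proof on its own terms. Its overall structure is sound: the reduction to connected building sets via $B=\bigsqcup_{C \in B_\mathrm{max}} B|_C$ is correct; the tree $T=N\cup\{S\}$, the partition of $S$ into the sets $\pi(I)$, and the observation that condition (2) of Definition \ref{ys:nested} forces $\pi(I)\ne\emptyset$ together give the bound $|N|\le|S|-1$; and the enlargement step --- adjoining the largest $K^*\in B$ with $p\in K^*\subseteq I\setminus\{q\}$ --- is the right construction, with both halves of the case analysis for condition (2) (all $M_i$ inside $I$ versus some $M_i$ disjoint from $I$) argued correctly.

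There is, however, one genuine (though easily repaired) flaw: the verification of condition (1) for $N\cup\{K^*\}$. You claim that whenever $L\in N$ meets $K^*$, the union $K^*\cup L$ again lies in the family $\{K\in B \mid p\in K\subseteq I\setminus\{q\}\}$. This requires $L\subseteq I\setminus\{q\}$, which fails precisely when $I\in N$ and $I\subseteq L$; for instance $L=I$ itself meets $K^*$ (both contain $p$) but contains $q$, so $K^*\cup L\not\subseteq I\setminus\{q\}$ and maximality of $K^*$ gives nothing. The conclusion still holds, but for a different reason, and the fix is a short case distinction: if $L$ meets $K^*$ then $L$ meets $I$, so by comparability in $T$ either $I\subseteq L$, in which case $K^*\subseteq I\subseteq L$ and condition (1) holds outright, or $L\subsetneq I$, in which case $L$ lies inside a child of $I$, hence avoids $q$, and your family argument then applies to give $L\subseteq K^*$. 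With that case inserted, the proof is complete and correct.
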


We construct a toric variety from a building set.
First, suppose that $B$ is a connected building set on $S=\{1, \ldots, n+1\}$.
We denote by $e_1, \ldots, e_n$ the standard basis for $\mathbb{R}^n$
and we put $e_{n+1}=-e_1-\cdots-e_n$.
For a nonempty subset $I$ of $S$,
we denote by $e_I$ the vector $\sum_{i \in I}e_i$ in $\mathbb{R}^n$.
Note that $e_S=0$.
For $N \in \mathcal{N}(B) \setminus \{\emptyset\}$,
we denote by $\mathbb{R}_{\geq 0}N$
the $|N|$-dimensional rational strongly convex polyhedral cone
$\sum_{I \in N}\mathbb{R}_{\geq 0}e_I$ in $\mathbb{R}^n$,
and we define $\mathbb{R}_{\geq 0}\emptyset$ to be $\{0\} \subset \mathbb{R}^n$.
Then $\Delta(B)=\{\mathbb{R}_{\geq 0}N \mid N \in \mathcal{N}(B)\}$
forms a fan in $\mathbb{R}^n$
and thus we have an $n$-dimensional toric variety $X(\Delta(B))$.
If $B$ is disconnected,
then we define $X(\Delta(B))=\prod_{C \in B_\mathrm{max}}X(\Delta(B|_C))$.

\begin{theorem}[{\cite[Corollary 5.2 and Theorem 6.1]{zele06}}]
Let $B$ be a building set.
Then the associated toric variety $X(\Delta(B))$ is nonsingular and projective.
\end{theorem}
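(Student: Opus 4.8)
The plan is to treat the two assertions separately and, in each case, to reduce at once to the connected case. If $B$ is disconnected then $B=\bigsqcup_{C \in B_\mathrm{max}}B|_C$ and by definition $X(\Delta(B))=\prod_{C \in B_\mathrm{max}}X(\Delta(B|_C))$ is a finite product of the varieties attached to the connected building sets $B|_C$; since a finite product of nonsingular projective varieties is again nonsingular and projective, it suffices to prove the statement when $B$ is connected on $S=\{1,\ldots,n+1\}$, so that $\Delta(B)$ is a fan in $\mathbb{R}^n$. For nonsingularity I would show that every cone of $\Delta(B)$ is smooth, i.e.\ its ray generators extend to a $\mathbb{Z}$-basis of $\mathbb{Z}^n$; for projectivity I would exhibit a polytope whose normal fan is $\Delta(B)$.

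For nonsingularity, because faces of smooth cones are smooth and every nested set extends to a maximal one, it is enough to treat a maximal nested set $N$; by Proposition \ref{ys:pure} we have $|N|=n$, so the claim becomes that $\{e_I \mid I \in N\}$ is a $\mathbb{Z}$-basis of $\mathbb{Z}^n$. The key structural input is that $N$ is a laminar family: by condition (i) of Definition \ref{ys:nested} any two members are nested or disjoint, so $N \cup \{S\}$ is a tree under inclusion with root $S$. For each node $I$ let $\mathrm{ch}(I)$ be its children (the maximal members of $N$ properly contained in $I$) and set $R_I=I \setminus \bigcup_{J \in \mathrm{ch}(I)}J$; the children are pairwise disjoint, so $I$ is the disjoint union of $R_I$ and the $J \in \mathrm{ch}(I)$, whence $e_I=e_{R_I}+\sum_{J \in \mathrm{ch}(I)}e_J$. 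The crucial point, where I would use condition (ii) of Definition \ref{ys:nested} together with connectedness, is that every $R_I$ is nonempty: if some $R_I$ were empty then $I$ would be the disjoint union of at least two of its children, all lying in $N$, and this union (equal to $I$, or to $S$ when $I$ is the root) would belong to $B$, contradicting condition (ii). Since $N \cup \{S\}$ has $n+1$ nodes and the nonempty sets $R_I$ partition the $(n+1)$-element set $S$, each $R_I$ is a singleton, giving a bijection $I \mapsto \phi(I)$ from $N \cup \{S\}$ onto $S$ with $R_I=\{\phi(I)\}$. The relation $e_I=e_{\phi(I)}+\sum_{J \in \mathrm{ch}(I)}e_J$ is then unipotent-triangular with respect to the tree order, so the transition matrix between $\{e_I \mid I \in N\}$ and $\{e_{\phi(I)} \mid I \in N\}$ has determinant $\pm 1$. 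The latter set equals $\{e_s \mid s \in S \setminus \{\phi(S)\}\}$, an $n$-element subset of $\{e_1,\ldots,e_{n+1}\}$; since $e_1+\cdots+e_{n+1}=0$ is the only relation among these vectors, any $n$ of them form a $\mathbb{Z}$-basis, and therefore so does $\{e_I \mid I \in N\}$. This proves nonsingularity; in particular each maximal cone is $n$-dimensional.

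For projectivity I would identify $\Delta(B)$ with the normal fan of the nestohedron $P_B=\sum_{\emptyset \ne I \in B}\Delta_I$, the Minkowski sum of the simplices $\Delta_I=\mathrm{conv}\{e_i \mid i \in I\}$. The plan is to check that each maximal nested set $N$ corresponds to a vertex of $P_B$ whose inner normal cone is exactly $\mathbb{R}_{\geq 0}N$, the matching being governed by the same laminar/residual combinatorics used above. Once this identification is established, $\Delta(B)$ is complete and $X(\Delta(B))$ is the projective toric variety of the polytope $P_B$, hence projective; equivalently one can package this as a strictly convex support function on $\Delta(B)$ inherited from $P_B$.

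I expect the main obstacle to be the combinatorial heart of the smoothness argument, namely the proof that every residual set $R_I$ is a singleton, which is where conditions (i) and (ii) of the nested-set definition and the connectedness of $B$ must be combined carefully, together with the bookkeeping needed to match the vertices and faces of $P_B$ with the cones of $\Delta(B)$ in the projectivity half. The unimodularity step and the observation that any $n$ of $e_1,\ldots,e_{n+1}$ form a basis are then routine.
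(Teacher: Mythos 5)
The paper itself does not prove this theorem; it quotes it from Zelevinsky \cite{zele06}, so your attempt can only be compared with the cited source. Your nonsingularity half is complete, correct, and is essentially the argument behind Zelevinsky's Corollary 5.2: a maximal nested set $N$ together with $S$ is a laminar family, hence a tree under inclusion; condition (2) of Definition \ref{ys:nested} combined with connectedness (which gives $S \in B$) forces every residual set $R_I$ to be nonempty; the count $|N \cup \{S\}| = n+1 = |S|$ supplied by Proposition \ref{ys:pure} then forces each $R_I$ to be a singleton; and the unipotent triangular change of basis reduces everything to the fact that any $n$ of $e_1, \ldots, e_{n+1}$ form a $\mathbb{Z}$-basis of $\mathbb{Z}^n$. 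I checked the details --- disjointness and covering by the $R_I$, the two cases ($I \in N$ and $I = S$, each needing at least two children) in the nonemptiness argument, and the final unimodularity computation --- and found no gaps there.

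The projectivity half, however, is a genuine gap rather than a proof. Your sentence ``the plan is to check that each maximal nested set $N$ corresponds to a vertex of $P_B$ whose inner normal cone is exactly $\mathbb{R}_{\geq 0}N$'' is not a reduction of the problem; it \emph{is} the problem. That the inner normal fan of the Minkowski sum $\sum_{I \in B}\Delta_I$ of simplices (your $P_B$) coincides with $\Delta(B)$ is precisely the content of Zelevinsky's Theorem 6.1 (equivalently, Postnikov's nestohedron theorem), and all of the work lives there: a priori the normal fan of a Minkowski sum is the common refinement of the normal fans of the summands, so one must show this refinement has exactly the cones $\mathbb{R}_{\geq 0}N$, $N \in \mathcal{N}(B)$, no more and no finer. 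Concretely, you would need to specify, for each maximal nested set $N$ and each $I \in B$, which vertex $e_i$, $i \in I$, of $\Delta_I$ minimizes the pairing with a generic $u$ in the interior of $\mathbb{R}_{\geq 0}N$ (this selection is again governed by the tree structure of $N$), verify that the resulting sum is a vertex of the polytope whose normal cone contains $\mathbb{R}_{\geq 0}N$, and verify that these cones exhaust $\mathbb{R}^n$ so that equality holds. Note also that completeness of $\Delta(B)$ --- without which projectivity cannot even be posed --- is not delivered by your smoothness half and would come only out of this missing identification. As written, then, your proposal proves smoothness but leaves projectivity at the level of a correctly chosen, but unexecuted, strategy.
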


\begin{example}\label{ys:example1}
Let $S=\{1, 2, 3\}$ and $B=\{\{1\}, \{2\}, \{3\}, \{2, 3\}, \{1, 2, 3\}\}$.
Then the nested complex $\mathcal{N}(B)$ is
\begin{align*}
&\{\emptyset, \{\{1\}\}, \{\{2\}\}, \{\{3\}\}, \{\{2, 3\}\},\\
&\{\{1\}, \{2\}\}, \{\{1\}, \{3\}\}, \{\{2\}, \{2, 3\}\}, \{\{3\}, \{2, 3\}\}\}.
\end{align*}
Hence we have the fan $\Delta(B)$ in Figure \ref{ys:fan1}.
Therefore the associated toric variety $X(\Delta(B))$
is $\mathbb{P}^2$ blown-up at one point.
\begin{figure}[h]
\begin{center}
\includegraphics[width=5cm]{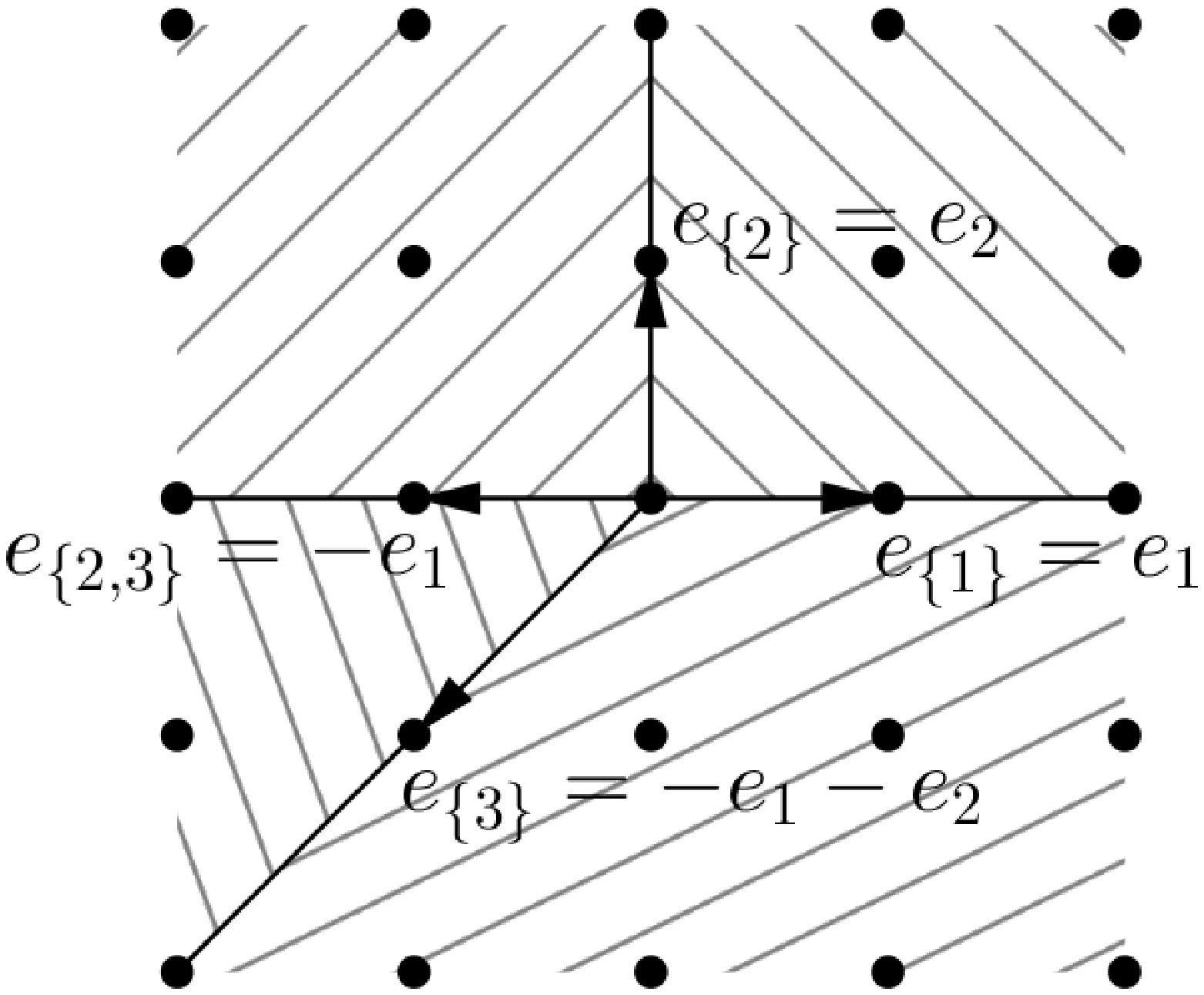}
\end{center}
\caption{the fan $\Delta(B)$.}
\label{ys:fan1}
\end{figure}
\end{example}

\begin{definition}
Let $G$ be a finite simple graph, that is,
a finite undirected graph with no loops and no multiple edges.
We denote by $V(G)$ and $E(G)$ its node set and edge set respectively.
For a subset $I$ of $V(G)$, the {\it induced subgraph} $G|_I$ is defined
by $V(G|_I)=I$ and $E(G|_I)=\{\{v, w\} \in E(G) \mid v, w \in I\}$.
The {\it graphical building set} $B(G)$ of $G$
is defined to be $\{I \subset V(G) \mid I \ne \emptyset, G|_I \mbox{ is connected}\}$.
The graphical building set $B(G)$ is in fact a building set on $V(G)$.
The graphical building set $B(G)$ is connected if and only if $G$ is connected.
\end{definition}

\begin{remark}
Carr--Devadoss \cite{carr06} introduced {\it graph associahedra} for finite simple graphs.
Graph associahedra include many important families of polytopes
such as associahedra (or Stasheff polytopes),
cyclohedra (or Bott--Taubes polytopes), stellohedra and permutohedra.
A graph associahedron can be realized as a smooth polytope
and $\Delta(B(G))$ coincides with the normal fan of the graph associahedron of $G$.
\end{remark}

\subsection{Intersection numbers}

Let $\Delta$ be a nonsingular complete fan in $\mathbb{R}^n$
and let $X(\Delta)$ be the associated toric variety.
We denote by $\Delta(r)$ the set of $r$-dimensional cones in $\Delta$ for $r=0, 1, \ldots, n$,
and denote by $V(\sigma)$ the orbit closure corresponding to $\sigma \in \Delta$.
The codimension of $V(\sigma)$ in $X(\Delta)$ equals the dimension of $\sigma$.

\begin{proposition}[e.g., \cite{oda88}]\label{ys:intersection number}
Let $\Delta$ be a nonsingular complete fan in $\mathbb{R}^n$
and $\tau=\mathbb{R}_{\geq 0}v_1+\cdots+\mathbb{R}_{\geq 0}v_{n-1} \in \Delta(n-1)$,
where $v_1, \ldots, v_{n-1}$ are primitive vectors in $\mathbb{Z}^n$.
Let $v$ and $v'$ be the distinct primitive vectors in $\mathbb{Z}^n$
such that $\tau+\mathbb{R}_{\geq 0}v, \tau+\mathbb{R}_{\geq 0}v' \in \Delta(n)$.
Then there exist unique integers $a_1, \ldots, a_{n-1}$
such that $v+v'+a_1v_1+\cdots+a_{n-1}v_{n-1}=0$.
Furthermore, the intersection number $(-K_{X(\Delta)}.V(\tau))$
of the anticanonical divisor $-K_{X(\Delta)}$ with the torus-invariant curve $V(\tau)$
equals $2+a_1+\cdots+a_{n-1}$.
\end{proposition}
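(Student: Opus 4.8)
The plan is to split the statement into its linear-algebraic part (the wall relation) and its geometric part (the intersection number), and to reduce the latter to a short list of elementary intersection numbers by means of linear equivalence.

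First I would establish the wall relation. Since $\sigma=\tau+\mathbb{R}_{\geq 0}v \in \Delta(n)$ is a nonsingular cone, the vectors $v_1, \ldots, v_{n-1}, v$ form a $\mathbb{Z}$-basis of $\mathbb{Z}^n$. Hence every vector, and in particular $v'$, has a unique expression $v'=cv+\sum_i(-a_i)v_i$ with $c, a_i \in \mathbb{Z}$, which gives uniqueness of the $a_i$ as soon as we know that $c=-1$. To pin down $c$ I would argue in two ways. Choosing a linear functional $\ell$ that vanishes on $v_1, \ldots, v_{n-1}$ with $\ell(v)>0$, the fact that $\sigma$ and $\sigma'$ lie on opposite sides of the hyperplane spanned by $\tau$ forces $\ell(v')<0$, hence $c<0$. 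On the other hand, nonsingularity of $\sigma'=\tau+\mathbb{R}_{\geq 0}v'$ says that $v_1, \ldots, v_{n-1}, v'$ is also a $\mathbb{Z}$-basis; comparing the two bases, the change-of-basis matrix is block-triangular with an $(n-1)\times(n-1)$ identity block and lower-right entry $c$, so its determinant is $c=\pm 1$. Together these give $c=-1$, that is, $v+v'+a_1v_1+\cdots+a_{n-1}v_{n-1}=0$.

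For the intersection number I would use that $-K_{X(\Delta)}=\sum_{\rho \in \Delta(1)}D_\rho$, where $D_\rho$ denotes the torus-invariant prime divisor attached to the ray $\rho$, and compute each $(D_\rho.V(\tau))$. By the orbit--cone correspondence, $D_\rho \cap V(\tau)=\emptyset$ unless $\rho$ is a ray of a cone containing $\tau$; the only such cones are $\tau, \sigma, \sigma'$, so the only possibly nonzero contributions come from $\rho \in \{v, v', v_1, \ldots, v_{n-1}\}$. For $\rho=v$ the curve $V(\tau)$ meets $D_v$ in the single torus-fixed point $V(\sigma)$, and because $\sigma$ is a smooth cone this intersection is transverse of multiplicity one, giving $(D_v.V(\tau))=1$, and likewise $(D_{v'}.V(\tau))=1$. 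For the remaining contributions I would invoke linear equivalence: for every $m \in M=\mathrm{Hom}(\mathbb{Z}^n, \mathbb{Z})$ the principal divisor $\sum_\rho \langle m, v_\rho\rangle D_\rho$ is linearly equivalent to $0$, so $\sum_\rho \langle m, v_\rho\rangle (D_\rho.V(\tau))=0$. Taking $m=m_j$ from the basis of $M$ dual to $v_1, \ldots, v_{n-1}, v$, the relation $v'=-v-\sum_i a_iv_i$ gives $\langle m_j, v'\rangle=-a_j$ while $\langle m_j, v\rangle=0$ and $\langle m_j, v_i\rangle=\delta_{ij}$, so the identity collapses to $-a_j(D_{v'}.V(\tau))+(D_{v_j}.V(\tau))=0$; using $(D_{v'}.V(\tau))=1$ this yields $(D_{v_j}.V(\tau))=a_j$.

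Finally I would sum the contributions: $(-K_{X(\Delta)}.V(\tau))=(D_v.V(\tau))+(D_{v'}.V(\tau))+\sum_i(D_{v_i}.V(\tau))=1+1+\sum_i a_i=2+a_1+\cdots+a_{n-1}$. The one step that is genuinely geometric rather than bookkeeping is the claim $(D_v.V(\tau))=1$: this is where nonsingularity of the cone $\sigma$ is essential, guaranteeing that $V(\tau)$ and $D_v$ meet transversally in a single reduced point, and it is the step I would expect to require the most care, or else a direct appeal to the general toric intersection formula in \cite{oda88}.
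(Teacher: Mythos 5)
The paper gives no proof of this proposition at all: it is quoted as a known background fact with the citation \cite{oda88}, so there is no internal argument to compare yours against. Judged on its own merits, your proof is correct, and it is essentially the standard argument found in the toric literature. The wall-relation part is clean: nonsingularity of the two maximal cones gives two $\mathbb{Z}$-bases whose change-of-basis matrix has determinant $c=\pm 1$, and the separation of $\sigma$ and $\sigma'$ by the hyperplane $\mathrm{span}(\tau)$ forces $c<0$ (note that $\ell(v')\ne 0$ because $\sigma'$ is $n$-dimensional, so the strict inequality is justified), hence $c=-1$ and the $a_i$ are unique. The intersection-number part — writing $-K_{X(\Delta)}=\sum_\rho D_\rho$, discarding all rays not lying in a cone containing $\tau$, obtaining $(D_v.V(\tau))=(D_{v'}.V(\tau))=1$ from smoothness, and extracting $(D_{v_j}.V(\tau))=a_j$ by pairing the principal divisors $\mathrm{div}(\chi^{m_j})$ against the complete curve $V(\tau)$ — is exactly how the formula is derived in \cite{oda88}. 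You rely on two unproved but genuinely standard facts: that two distinct $n$-dimensional cones of a fan sharing the facet $\tau$ lie on opposite sides of $\mathrm{span}(\tau)$, and that on a smooth toric variety $D_v\cdot V(\tau)=V(\sigma)$ with coefficient one when $\tau$ and the ray of $v$ span the cone $\sigma$; you flag the latter explicitly as the step requiring either care or a citation, which is the right assessment. With references supplied for those two facts, the proof is complete.
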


\begin{proposition}\label{ys:Nakai}
Let $X(\Delta)$ be an $n$-dimensional nonsingular projective toric variety.
Then the following hold:
\begin{enumerate}
\item $X(\Delta)$ is Fano if and only if
the intersection number $(-K_{X(\Delta)}.V(\tau))$ is positive for every
$(n-1)$-dimensional cone $\tau$ in $\Delta$ (\cite[Lemma 2.20]{oda88}).
\item $X(\Delta)$ is weak Fano if and only if
$(-K_{X(\Delta)}.V(\tau))$ is nonnegative for every
$(n-1)$-dimensional cone $\tau$ in $\Delta$ (\cite[Proposition 6.17]{sato00}).
\end{enumerate}
\end{proposition}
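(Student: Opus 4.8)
The plan is to translate both assertions into the language of support functions, which converts ampleness and nefness into convexity conditions that can be checked one wall at a time, and then to read off these local conditions directly from the intersection-number formula of Proposition \ref{ys:intersection number}. The genuine content separating part (1) from part (2) is that, for the anticanonical divisor, bigness is automatic, so that nefness alone governs the weak Fano condition.

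First I would recall the standard toric dictionary. A torus-invariant $\mathbb{R}$-Cartier divisor $D=\sum_{\rho \in \Delta(1)} a_\rho V(\rho)$ on $X(\Delta)$ determines a support function $h_D$ that is linear on each maximal cone and satisfies $h_D(v_\rho)=-a_\rho$ on the primitive generator $v_\rho$ of each ray $\rho$. As in \cite{oda88}, $D$ is nef if and only if $h_D$ is convex, and $D$ is ample if and only if $h_D$ is strictly convex, and in either case convexity is a local condition that need only be verified across each wall $\tau \in \Delta(n-1)$. I would then make the link with the intersection number explicit: fixing a wall $\tau=\mathbb{R}_{\geq 0}v_1+\cdots+\mathbb{R}_{\geq 0}v_{n-1}$, the two adjacent maximal cones $\tau+\mathbb{R}_{\geq 0}v$ and $\tau+\mathbb{R}_{\geq 0}v'$, and the relation $v+v'+a_1v_1+\cdots+a_{n-1}v_{n-1}=0$ of Proposition \ref{ys:intersection number}, a direct computation of the linear extension across $\tau$ shows that convexity of $h_D$ at $\tau$ is equivalent to $(D.V(\tau))\geq 0$ and strict convexity to $(D.V(\tau))>0$.

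Applying this with $D=-K_{X(\Delta)}=\sum_{\rho \in \Delta(1)} V(\rho)$, so that every $a_\rho=1$ and $(-K_{X(\Delta)}.V(\tau))=2+a_1+\cdots+a_{n-1}$, immediately yields part (1): $X(\Delta)$ is Fano, i.e. $-K_{X(\Delta)}$ is ample, precisely when $h_{-K}$ is strictly convex, i.e. when every $(-K_{X(\Delta)}.V(\tau))$ is positive. For part (2), the nefness of $-K_{X(\Delta)}$ is characterized in the same way by the nonnegativity of every $(-K_{X(\Delta)}.V(\tau))$, so it remains only to see that the weak Fano condition is nefness together with bigness and that bigness adds nothing. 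I would argue this by considering the polytope $P_{-K}=\{m\in M_\mathbb{R} \mid \langle m,v_\rho\rangle \geq -1 \text{ for all } \rho\}$ associated to $-K_{X(\Delta)}$: since $\langle 0,v_\rho\rangle=0>-1$ for every $\rho$, the origin lies in the interior of $P_{-K}$, so $P_{-K}$ is $n$-dimensional, and hence $(-K_{X(\Delta)})^n=n!\,\mathrm{vol}(P_{-K})>0$ whenever $-K_{X(\Delta)}$ is nef. Thus a nef anticanonical divisor is automatically big, and $X(\Delta)$ is weak Fano if and only if every $(-K_{X(\Delta)}.V(\tau))$ is nonnegative.

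The main obstacle is this last step: one must confirm that bigness imposes no condition beyond nefness. This rests on the fact that the anticanonical polytope always contains the origin in its interior and is therefore full-dimensional, which is special to $-K_{X(\Delta)}$ and is exactly what makes the weak Fano criterion the nonstrict analogue of the Fano criterion. The remaining computations, namely verifying the support-function dictionary wall by wall and matching convexity with the sign of $2+a_1+\cdots+a_{n-1}$, are routine and follow the pattern of \cite{oda88} and \cite{sato00}.
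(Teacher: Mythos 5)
Your proposal is mathematically sound, but note that the paper does not prove this proposition at all: it is quoted as a known result, with part (1) cited to Oda's book and part (2) to Sato's classification paper, so there is no internal proof to compare against. What you have written is essentially a reconstruction of the standard arguments in those references: the toric Kleiman/Nakai dictionary (nef $\Leftrightarrow$ convex support function, ample $\Leftrightarrow$ strictly convex, both checkable wall by wall and matched against the sign of $2+a_1+\cdots+a_{n-1}$), plus the observation that bigness is free for a nef anticanonical divisor because the polytope $P_{-K}=\{m \mid \langle m, v_\rho\rangle \geq -1 \text{ for all }\rho\}$ contains the origin in its interior, hence is full-dimensional, hence $(-K)^n=n!\,\mathrm{vol}(P_{-K})>0$. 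You correctly isolated the one genuinely nontrivial point, namely that the weak Fano condition reduces to nefness alone; that is exactly the content of the cited Proposition 6.17 of Sato. All the supporting facts you invoke (local-to-global convexity on complete fans, $D^n=n!\,\mathrm{vol}(P_D)$ for nef toric divisors, and nef plus positive top self-intersection implies big on a projective variety) are standard and apply under the stated hypotheses, so the argument has no gaps.
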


To compute intersection numbers for the toric variety associated to a building set,
we need the following proposition:

\begin{proposition}[{\cite[Proposition 4.5]{zele06}}]\label{ys:pair}
Let $B$ be a building set on $S$
and let $I_1, I_2 \in B$ with $I_1 \ne I_2$ and $N \in \mathcal{N}(B)$
such that $N \cup \{I_1\}, N \cup \{I_2\} \in \mathcal{N}(B)_\mathrm{max}$.
Then the following hold:
\begin{enumerate}
\item We have $I_1 \not\subset I_2$ and $I_2 \not\subset I_1$.
\item If $I_1 \cap I_2 \ne \emptyset$, then $(B|_{I_1 \cap I_2})_\mathrm{max} \subset N$.
\item There exists a family $\{I_3, \ldots, I_k\} \subset N$ such that
$I_1 \cup I_2, I_3, \ldots, I_k$ are pairwise disjoint
and $I_1 \cup \cdots \cup I_k \in N \cup B_\mathrm{max}$
(the family $\{I_3, \ldots, I_k\}$ can be empty).
\end{enumerate}
\end{proposition}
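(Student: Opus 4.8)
The plan is to use throughout that, by Proposition \ref{ys:pure}, all maximal nested sets have the same cardinality, so $|N\cup\{I_1\}|=|N\cup\{I_2\}|$ forces $I_1,I_2\notin N$, and no nested set can properly contain a maximal one. Since a nested set of $B$ is a disjoint union of nested sets of the restrictions $B|_C$ to the $B$-components $C$, and a short argument shows that $I_1$ and $I_2$ must lie in a common component, I may assume $B$ is connected on $S$ with $B_\mathrm{max}=\{S\}$ and treat the general case componentwise. For part (1), suppose $I_1\subseteq I_2$. Then $N\cup\{I_1,I_2\}$ is a nested set: in Definition \ref{ys:nested}, condition (i) holds because the only new pair $\{I_1,I_2\}$ is comparable while every other pair already occurs in $N\cup\{I_1\}$ or $N\cup\{I_2\}$, and condition (ii) holds because any pairwise disjoint subfamily contains at most one of $I_1,I_2$ (they are not disjoint) and so lies entirely in $N\cup\{I_1\}$ or in $N\cup\{I_2\}$. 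This contradicts the maximality of $N\cup\{I_1\}$, giving $I_1\not\subseteq I_2$ and, symmetrically, $I_2\not\subseteq I_1$.

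Part (2) uses the same mechanism. Fix $C\in(B|_{I_1\cap I_2})_\mathrm{max}$. Because the $B|_{I_1\cap I_2}$-components partition $I_1\cap I_2$ and $C\subseteq I_1\cap I_2\subsetneq I_1$ gives $C\in B\setminus B_\mathrm{max}$, it suffices to show that $N\cup\{I_1,C\}$ is nested: maximality of $N\cup\{I_1\}$ then forces $C\in N\cup\{I_1\}$, and $C\ne I_1$ yields $C\in N$. Condition (i) is verified by classifying each $J\in N$ by its relation to $I_1$ and to $I_2$ (available since $N\cup\{I_1\}$ and $N\cup\{I_2\}$ are nested) and checking that in every case $J$ is comparable with or disjoint from $C\subseteq I_1\cap I_2$. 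For condition (ii) I take a pairwise disjoint family containing $C$, with union $U$, and assume $U\in B$; intersecting with $I_2$ and then with $I_1$, and repeatedly applying the building-set axiom together with condition (ii) for $N\cup\{I_2\}$ and for $N\cup\{I_1\}$, forces every member of the family into $I_1\cap I_2$, so $U\in B|_{I_1\cap I_2}$ with $C\subseteq U$, contradicting the maximality of $C$.

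Part (3) is the crux. Working in the laminar family $M\cup B_\mathrm{max}$, where $M=N\cup\{I_1\}$ (any two members are nested or disjoint, since $N$ is nested and the $B$-components are disjoint), let $T$ be the smallest element of $N\cup B_\mathrm{max}$ properly containing $I_1$; it exists because $S\supsetneq I_1$, and using part (1) together with the restriction principle one shows $I_2\subseteq T$. The key tool is that restriction principle: for $T\in(M\cup B_\mathrm{max})\cap B$ the set $\{J\in M\mid J\subsetneq T\}$ is a maximal nested set of the connected building set $B|_T$, hence has cardinality $|T|-1$ by Proposition \ref{ys:pure}. Iterating this yields that the maximal elements of $M$ properly contained in $T$ are pairwise disjoint and cover all but exactly one point $t_T$ of $T$, and likewise for $M'=N\cup\{I_2\}$. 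Comparing the resulting decompositions of $T$ from $M$ and from $M'$, the common ``sibling'' blocks $D\in N$ that are disjoint from $I_1\cup I_2$ become the sought $I_3,\dots,I_k$, and the whole statement reduces to the single assertion that the leftover point satisfies $t_T\in I_2$, i.e.\ that $I_1\cup I_2$ together with these common siblings already exhausts $T$.

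Proving $t_T\in I_2$ is where I expect the genuine difficulty, and it is exactly here that the hypothesis that $N\cup\{I_1,I_2\}$ is \emph{not} nested (forced by the maximality of $N\cup\{I_1\}$) must be used. I would split into two cases. If $I_1\cap I_2\ne\emptyset$, then $I_1\cup I_2\in B$ by the building-set axiom, and part (2) places the components of $I_1\cap I_2$ inside $N$, which pins down how $I_2$ sits in the $M$-decomposition of $T$. If $I_1\cap I_2=\emptyset$, then condition (i) for $N\cup\{I_1,I_2\}$ holds, so its non-nestedness must come from condition (ii), producing a pairwise disjoint family through both $I_1$ and $I_2$ whose union lies in $B$; this union is what drives $t_T$ into $I_2$. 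Once $t_T\in I_2$ is established, we obtain $T=(I_1\cup I_2)\sqcup I_3\sqcup\cdots\sqcup I_k$ with $I_3,\dots,I_k\in N$ pairwise disjoint and disjoint from $I_1\cup I_2$, and $T\in N\cup B_\mathrm{max}$, which is the desired conclusion.
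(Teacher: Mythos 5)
The paper never proves this proposition: it is imported verbatim from Zelevinsky (\cite[Proposition 4.5]{zele06}), so there is no internal proof to compare yours against, and I have checked your argument on its own terms. Its architecture is sound. The componentwise reduction works; your proof of (1) by adjoining $I_2$ to $N \cup \{I_1\}$ is complete and correct; and your proof of (2) by showing $N \cup \{I_1, C\}$ is nested is correct and completable exactly as you indicate: for a disjoint family $\{C, J_1, \ldots, J_m\}$ with union $U \in B$, the building-set axiom gives $U \cup I_1, U \cup I_2 \in B$, and condition (ii) for $N \cup \{I_1\}$ (resp.\ $N \cup \{I_2\}$) applied to $\{I_1\} \cup \{J_i : J_i \cap I_1 = \emptyset\}$ (resp.\ the analogous family for $I_2$) forces every $J_i$ into $I_1$ (resp.\ $I_2$), hence $U \in B|_{I_1 \cap I_2}$ properly contains $C$, a contradiction. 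For part (3), your ``restriction principle'' is true, but be aware it is the real technical content and you assert it without proof: one must show that any $C \in B|_T \setminus \{T\}$ compatible with $\{J \in M \mid J \subsetneq T\}$ inside $B|_T$ is compatible with all of $M$, by splitting a disjoint family containing $C$ into blocks inside $T$ (whose union then lies in $B|_T$, violating condition (ii) there) and blocks disjoint from $T$ (which together with $T$ itself violate condition (ii) for $M$, after applying the building-set axiom to adjoin $T$ to the union). Granting this, your cardinality count correctly yields the partition $T = I_1 \sqcup D_1 \sqcup \cdots \sqcup D_p \sqcup \{t_T\}$ and the reduction of (3) to the single claim $t_T \in I_2$.

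On that last claim, one correction of strategy. You assert that the non-nestedness of $N \cup \{I_1, I_2\}$ ``must be used'' in the disjoint case, via a disjoint family through $I_1$ and $I_2$ whose union lies in $B$; I do not see how that family forces $t_T \in I_2$, and you do not say. No such detour is needed: the same direct mechanism as in your intersecting case works uniformly. Since $T$ coincides with the minimal element of $N \cup B_\mathrm{max}$ properly containing $I_2$ (your two minimal sets each contain both $I_1$ and $I_2$, hence are comparable and equal), every $D_i$ is either contained in $I_2$ or disjoint from it, and no $D_i$ can contain $I_2$. So if $t_T \notin I_2$, then $I_2$ would be the disjoint union of the $D_i$ contained in it, together with the blocks of $(B|_{I_1 \cap I_2})_\mathrm{max} \subset N$ from part (2) when $I_1 \cap I_2 \ne \emptyset$ --- that is, a disjoint union of elements of $N$ whose union lies in $B$, contradicting condition (ii) for $N$ if there are at least two blocks, and contradicting $I_2 \notin N$ if there is exactly one. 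With that replacement, and with the restriction principle proved, your outline closes up into a complete and correct self-contained proof.
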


Let $B$ be a connected building set on $S$
and let $I_1, I_2 \in B$ with $I_1 \ne I_2$ and $N \in \mathcal{N}(B)$
such that $N \cup \{I_1\}, N \cup \{I_2\} \in \mathcal{N}(B)_\mathrm{max}$.
Then by Proposition \ref{ys:pair} (3), there exists $\{I_3, \ldots, I_k\} \subset N$ such that
$I_1 \cup I_2, I_3, \ldots, I_k$ are pairwise disjoint
and $I_1 \cup \cdots \cup I_k \in N \cup B_\mathrm{max}=N \cup \{S\}$.
Since
\[
e_{I_1}+e_{I_2}-\sum_{C \in (B|_{I_1 \cap I_2})_\mathrm{max}}e_C
+e_3+\cdots+e_k-e_{I_1 \cup \cdots \cup I_k}=0,
\]
Proposition \ref{ys:intersection number} gives
\[
(-K_{X(\Delta(B))}.V(\mathbb{R}_{\geq0}N))=\left\{\begin{array}{ll}
k-|(B|_{I_1 \cap I_2})_\mathrm{max}|-1 & (I_1 \cup \cdots \cup I_k \in N), \\
k-|(B|_{I_1 \cap I_2})_\mathrm{max}| & (I_1 \cup \cdots \cup I_k=S). \end{array}\right.
\]
If $I_1 \cap I_2=\emptyset$,
then $(B|_{I_1 \cap I_2})_\mathrm{max}$ is understood to be empty.
The theorems in Section 3 are proved by using Proposition \ref{ys:Nakai}
and the equation above.

\section{Toric Fano varieties associated to building sets}

\subsection{Toric Fano varieties associated to finite simple graphs}

The following theorems characterize finite simple graphs
whose associated toric varieties are Fano or weak Fano.

\begin{theorem}[{\cite[Theorem 3.1]{suya16a}}]\label{ys:theorem1}
Let $G$ be a finite simple graph. Then the following are equivalent:
\begin{enumerate}
\item The associated nonsingular projective toric variety $X(\Delta(B(G)))$ is Fano.
\item Each connected component of $G$ has at most three nodes.
\end{enumerate}
\end{theorem}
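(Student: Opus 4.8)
The plan is to reduce to the connected case and then analyse the walls of $\Delta(B(G))$ through the intersection-number formula at the end of Section~2. Since $B(G)=\bigsqcup_{C}B(G|_{C})$ as $C$ ranges over the connected components of $G$, and each $B(G|_{C})$ is connected, we have $X(\Delta(B(G)))=\prod_{C}X(\Delta(B(G|_{C})))$. A product of nonsingular projective varieties is Fano if and only if every factor is, because the anticanonical divisor of a product is the exterior sum of the factors' anticanonical divisors, which is ample exactly when each summand is. Hence it suffices to prove, for connected $G$, that $X(\Delta(B(G)))$ is Fano if and only if $m:=|V(G)|\le 3$. For the implication $m\le 3\Rightarrow$ Fano I would simply run through the finitely many connected simple graphs on at most three nodes---the point, the single edge, the path $P_3$, and the triangle $K_3$---and check via Proposition~\ref{ys:Nakai}(1) and the formula that every wall has positive intersection number; these varieties are a point, $\mathbb{P}^1$, and two del Pezzo surfaces, all Fano.

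For the converse I argue the contrapositive: if $G$ is connected with $m\ge 4$, I exhibit a wall with intersection number $0$. Fix a connected vertex ordering $v_1,\ldots,v_m$ (so each induced subgraph on $T_i:=\{v_1,\ldots,v_i\}$ is connected); then $M=\{T_1,\ldots,T_{m-1}\}$ is a chain, hence a maximal nested set by Proposition~\ref{ys:pure}. Suppose we can arrange the ordering and an index $j$ with $2\le j\le m-2$ such that $v_{j+1}$ has a neighbour in $T_{j-1}$, so that $T_j':=T_{j-1}\cup\{v_{j+1}\}$ induces a connected subgraph. Put $N=M\setminus\{T_j\}$, $I_1=T_j$, and $I_2=T_j'$. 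Since the sets in $N\cup\{I_2\}$ again form a chain (only $T_j$ has been replaced by $T_j'$, and $T_i\subset T_j'\subset T_{i'}$ for $i\le j-1<j+1\le i'$), Definition~\ref{ys:nested} shows $N\cup\{I_1\}=M$ and $N\cup\{I_2\}$ are two maximal nested sets sharing the facet $N$. Here $I_1\cap I_2=T_{j-1}\ne\emptyset$ is connected, so $|(B|_{I_1\cap I_2})_\mathrm{max}|=1$, while $I_1\cup I_2=T_{j+1}\in N$ and the family of Proposition~\ref{ys:pair}(3) is empty, i.e. $k=2$. The intersection-number formula then yields
\[
(-K_{X(\Delta(B(G)))}.V(\mathbb{R}_{\geq 0}N))=k-|(B|_{I_1\cap I_2})_\mathrm{max}|-1=2-1-1=0,
\]
so by Proposition~\ref{ys:Nakai}(1) the variety is not Fano.

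The step that needs real work---the main obstacle---is producing such an ordering and index for \emph{every} connected $G$ with $m\ge 4$. I would argue as follows. Start from any connected ordering $v_1,\ldots,v_m$. If some $j\in[2,m-2]$ already works, we are done. Otherwise, for each $k$ with $3\le k\le m-1$ the vertex $v_k$ has no neighbour in $\{v_1,\ldots,v_{k-2}\}$; combined with the fact that a connected ordering forces a neighbour in $\{v_1,\ldots,v_{k-1}\}$, this makes $v_{k-1}$ the unique earlier neighbour of $v_k$, so $G|_{\{v_1,\ldots,v_{m-1}\}}$ is exactly the path $v_1-v_2-\cdots-v_{m-1}$ with $m-1\ge 3$. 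Re-rooting this path at $v_2$ gives the connected ordering $v_2,v_1,v_3,v_4,\ldots,v_{m-1},v_m$, for which the index $j=2$ works because $v_3$ is adjacent to $v_2$. This case analysis, together with the standard fact that every connected graph admits a connected vertex ordering, completes the construction of the bad wall and hence the proof.
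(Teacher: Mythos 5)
Your proof is correct, but it takes a genuinely different route from the one this paper uses. Here, Theorem \ref{ys:theorem1} is obtained as an immediate corollary of the general building-set criterion, Theorem \ref{ys:theorem3}: for a graphical building set, condition (2) of that theorem holds trivially when every component has at most three nodes, and it fails as soon as some component $G'$ has at least four nodes, since an induced path $v_1-v_2-v_3$ inside $G'$ gives $I_1=\{v_1,v_2\}$, $I_2=\{v_2,v_3\}$ with $I_1\cap I_2\neq\emptyset$, $I_1\not\subset I_2$, $I_2\not\subset I_1$, but $I_1\cup I_2\neq V(G')$. You instead reprove the statement from scratch using only the material of Section 2: you reduce to connected $G$ via the product decomposition, realize the chain $T_1\subset\cdots\subset T_{m-1}$ of a connected vertex ordering as a maximal nested set, and, when $m\geq 4$, exhibit two adjacent maximal nested sets $N\cup\{T_j\}$ and $N\cup\{T_j'\}$ whose wall relation $e_{T_j}+e_{T_j'}-e_{T_{j-1}}-e_{T_{j+1}}=0$ yields intersection number $2-1-1=0$, so $X(\Delta(B(G)))$ is not Fano by Proposition \ref{ys:Nakai} (1); your re-rooting argument (switching to the ordering $v_2,v_1,v_3,\ldots$ when the first $m-1$ vertices induce a path) correctly guarantees such a configuration always exists, and your verification that the modified chain is again nested is sound. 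Note that your degenerate wall is built from exactly the pair $I_1,I_2$ that witnesses the failure of Theorem \ref{ys:theorem3} (2), so both proofs identify the same obstruction; the difference is that the paper invokes the (cited, much harder) general classification to dispose of it in one line, while your argument is self-contained relative to the intersection-number formula and Proposition \ref{ys:pair}, and is essentially the direct strategy of the original reference \cite{suya16a}, which this survey does not reproduce.
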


\begin{theorem}[{\cite[Theorem 3.4]{suya16a}}]\label{ys:theorem2}
Let $G$ be a finite simple graph. Then the following are equivalent:
\begin{enumerate}
\item The toric variety $X(\Delta(B(G)))$ is weak Fano.
\item For any connected component $G'$ of $G$ and for any proper subset $I$ of $V(G')$,
the induced subgraph $G'|_I$ is not isomorphic to any of the following:
\begin{enumerate}
\item A cycle with $\geq 4$ nodes.
\item A diamond graph, that is, the graph obtained
by removing an edge from a complete graph with four nodes.
\end{enumerate}
\end{enumerate}
\end{theorem}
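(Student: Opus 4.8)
The plan is to combine Proposition \ref{ys:Nakai}(2) with the explicit intersection-number formula established after Proposition \ref{ys:pair}, arguing by contraposition in each direction. Since $B(G)=\bigsqcup_{G'}B(G')$ over the connected components $G'$ of $G$ and $X(\Delta(B(G)))$ is correspondingly a product, and since a product of nonsingular projective varieties is weak Fano if and only if each factor is, I would first reduce to the case where $G$ is connected; then $S=V(G)$ and $B(G)_{\mathrm{max}}=\{S\}$, and by Proposition \ref{ys:pure} every maximal nested set has $|S|-1$ elements.

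By Proposition \ref{ys:Nakai}(2), $X(\Delta(B(G)))$ fails to be weak Fano exactly when some wall $\mathbb{R}_{\ge 0}N$ carries a negative anticanonical degree. Writing $m=|(B(G)|_{I_1\cap I_2})_{\mathrm{max}}|$ for the number of connected components of $G|_{I_1\cap I_2}$, the displayed formula gives this degree as $k-m-1$ when $I_1\cup\cdots\cup I_k\in N$ and as $k-m$ when the union equals $S$. Hence a negative value forces $m\ge 2$: the sets $I_1,I_2$ and their union are connected, but their intersection splits into at least two components. Fixing two such components $A,C$ (which have no edge between them), connectivity of $I_1=A\cup C\cup(I_1\setminus I_2)$ and of $I_2=A\cup C\cup(I_2\setminus I_1)$ means $A$ and $C$ are joined both through $P:=I_1\setminus I_2$ and through $Q:=I_2\setminus I_1$, with $P\cap Q=\emptyset$. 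This is precisely a cyclic configuration.

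For the implication ``not weak Fano $\Rightarrow$ forbidden subgraph'' I would extract from this configuration an induced cycle or diamond on a proper vertex set. Taking a shortest $A$--$C$ path inside $G|_{I_1}$ (whose interior lies in $P$) and one inside $G|_{I_2}$ (interior in $Q$) and concatenating them yields a closed walk, from which one pulls out a cycle of length $\ge 4$, since $A$ and $C$ are non-adjacent. If this cycle is chordless it is the desired $C_{\ge 4}$; a chord, by minimality of the paths, must run between the two sides, and the innermost such chord produces either a shorter cycle, allowing one to recurse, or, in the length-four case, a diamond. All vertices used lie in $I_1\cup I_2$; in the case $k-m-1<0$ this union is the element $I_1\cup\cdots\cup I_k\in N$, hence a \emph{proper} connected subset, while in the case $k-m<0$ one has $m\ge 3$ and may choose $A,C$ and the bridging paths so as to avoid a third component, keeping the subgraph proper. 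Pinning the configuration down to exactly a hole or a diamond through this chord analysis, and guaranteeing properness in the $k-m<0$ case, is the main obstacle on this side.

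For the converse, ``forbidden subgraph $\Rightarrow$ not weak Fano'', I would run the construction backwards. Given $J\subsetneq V(G)$ with $G|_J\cong C_n$ ($n\ge 4$) or a diamond, I would take $I_1,I_2\subset J$ to be the two connected subsets that overlap precisely in a pair of non-adjacent vertices and together cover $J$, so that $I_1\cap I_2$ is disconnected with $m=2$ and $I_1\cup I_2=J$. Because $J$ is a \emph{proper} subset, $J\in B(G)\setminus B(G)_{\mathrm{max}}$ and may appear in a nested set; I would produce a maximal nested set $N$ containing $J$ for which $N\cup\{I_1\}$ and $N\cup\{I_2\}$ are both maximal, whence $I_1\cup I_2=J\in N$ and the formula returns $k-m-1=2-2-1=-1<0$. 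Here $N$ is assembled from the forced singletons at the two corner vertices, a maximal nested set of $B(G|_J)$ realizing the flip $I_1\leftrightarrow I_2$ inside the $J$-factor of the graph associahedron, and a maximal nested set of the contracted building set handling $V(G)\setminus J$. For $C_4$ and the diamond this flip is immediate, since the two corner singletons serve as the common part; I expect the genuinely delicate step --- the main obstacle on this side --- to be exhibiting the flip for a general cycle $C_n$, as larger holes contain no smaller forbidden induced subgraph and must be handled directly rather than reduced to the $C_4$ case.
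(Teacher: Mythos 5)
Your overall framework is exactly the one the paper indicates for the theorems of Section 3 (reduce to connected $G$, then combine Proposition \ref{ys:Nakai}(2) with the displayed formula for $(-K_{X(\Delta(B))}.V(\mathbb{R}_{\geq0}N))$; the paper itself defers the details to \cite{suya16a}), and your analysis of when negativity can occur ($m\geq k\geq 2$ when the union lies in $N$, $m\geq k+1\geq 3$ when it equals $S$) is correct. The genuine gap is in the direction ``forbidden subgraph $\Rightarrow$ not weak Fano'', at precisely the step you flagged: for an induced cycle $J$ with $n=|J|\geq 5$, the pair you commit to --- two arcs $I_1,I_2$ covering $J$ whose overlap is a pair of non-adjacent vertices $a,c$ --- admits \emph{no} wall at all, so no choice of $N$ can ``exhibit the flip''. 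Indeed, suppose $N\cup\{I_1\}$ and $N\cup\{I_2\}$ were maximal nested sets with $J\in N$. By Proposition \ref{ys:pair}(2), $\{a\},\{c\}\in N$. Every element of $N\cup\{I_1\}$ properly contained in $J$ must be nested-or-disjoint with respect to both $I_1$ and $I_2$, and a short check shows it is then $\{a\}$, $\{c\}$, $I_1$, or a connected subset of $P=I_1\setminus I_2$ or of $Q=I_2\setminus I_1$; moreover, by condition (2) of Definition \ref{ys:nested}, such a subset of $P$ (resp.\ $Q$) cannot contain the vertex adjacent to $a$ nor the one adjacent to $c$, since its union with $\{a\}$ or $\{c\}$ would lie in $B(G)$. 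This caps the number of such elements at $3+\max(|P|-2,0)+\max(|Q|-2,0)\leq n-2$. But there must be exactly $n-1$ of them: by Proposition \ref{ys:pure} a maximal nested set has $|V(G)|-1$ elements, and the members not properly contained in $J$ ($J$ itself, the sets containing $J$, and the sets disjoint from $J$) can be identified with a nested set of the graphical building set of $G$ with $J$ contracted to a point, so there are at most $|V(G)|-n$ of them. (For $C_4$ and the diamond the two counts agree, which is exactly why your construction works there and only there.) The missing idea is to make the overlap as large as possible rather than as small as possible: choose non-adjacent $p,q\in J$ and set $I_1=J\setminus\{q\}$, $I_2=J\setminus\{p\}$ (for the diamond, take $p,q$ to be the two adjacent degree-three vertices). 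Then $I_1\cap I_2=J\setminus\{p,q\}$ has exactly two components $A,C$, and one may take $N$ to consist of a maximal nested set of $B(G)|_A$ together with $A$, the same for $C$, the set $J$, and a chain $J\subsetneq J\cup\{u_1\}\subsetneq\cdots$ ending just below $S$ (add outside vertices keeping connectivity). Since $G|_J$ is induced there are no edges between $A$ and $C$, so condition (2) holds, both $N\cup\{I_i\}$ are maximal, the cardinality count is $(|A|+|C|+1)+1+(|V(G)|-n-1)=|V(G)|-1$, and the formula gives $2-2-1=-1<0$.

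The other direction of your argument is essentially sound but has two fixable inaccuracies. First, when $m\geq 3$ a shortest $A$--$C$ path in $G|_{I_1}$ need not have interior in $P$: it may pass through a third component of $I_1\cap I_2$; instead take a path of minimum length among all paths in $G|_{I_1}$ joining two \emph{distinct} components, whose interior then automatically avoids $I_1\cap I_2$. Second, in the case $I_1\cup\cdots\cup I_k=S$ with $m\geq 3$, your plan to ``choose $A,C$ and the bridging paths so as to avoid a third component'' can fail, since a third component may separate $A$ from $C$ inside $G|_{I_1}$. A clean substitute: your extraction (or the observation that graphs with no induced cycle of length $\geq 4$ and no induced diamond are exactly the block graphs, in which any two connected subgraphs meet in a connected set) shows that $G|_{I_1\cup I_2}$ contains an induced cycle of length $\geq 4$ or an induced diamond; if this subgraph were not proper, then $G$ itself would be a cycle or a diamond, and in such a graph any two incomparable proper connected subsets meet in at most two components, contradicting $m\geq 3$. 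With these repairs, and with the corrected wall in the converse direction, your outline does go through and is the same proof in spirit as the one the paper cites.
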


\begin{table}[htbp]
\begin{center}
\begin{tabular}{|c|c|c|}
\hline
\# of nodes & \# of connected graphs & \# of connected graphs whose \\
& & associated toric varieties are weak Fano \\
\hline
\hline
1 & \hphantom{00}1 & \hphantom{0}1 \\
\hline
2 & \hphantom{00}1 & \hphantom{0}1 \\
\hline
3 & \hphantom{00}2 & \hphantom{0}2 \\
\hline
4 & \hphantom{00}6 & \hphantom{0}6 \\
\hline
5 & \hphantom{0}21 & 10 \\
\hline
6 & 112 & 23 \\
\hline
\end{tabular}
\caption{the number of connected graphs
whose associated toric varieties are weak Fano.}
\label{ys:tbl1}
\end{center}
\end{table}

\begin{example}
\begin{enumerate}
\item Toric varieties associated to trees, cycles, complete graphs,
and a diamond graph are weak Fano.
\item The toric variety associated to the left graph in Figure \ref{ys:example2} is weak Fano,
but the toric variety associated to the right graph is not weak Fano.
\end{enumerate}
\end{example}

\begin{figure}[h]
\begin{center}
\includegraphics[width=3cm]{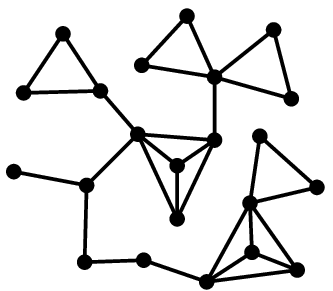}
\hspace*{1cm}
\includegraphics[width=1.8cm]{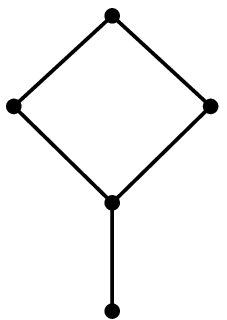}
\end{center}
\caption{examples.}
\label{ys:example2}
\end{figure}

\subsection{Toric Fano varieties associated to building sets}

\begin{theorem}[{\cite[Theorem 2.5]{suya16b}}]\label{ys:theorem3}
Let $B$ be a building set. Then the following are equivalent:
\begin{enumerate}
\item The toric variety $X(\Delta(B))$ is Fano.
\item For any $B$-component $C$ and for any $I_1, I_2 \in B|_C$
such that $I_1 \cap I_2 \ne \emptyset,
I_1 \not\subset I_2$ and $I_2 \not\subset I_1$,
we have $I_1 \cup I_2=C$ and $I_1 \cap I_2 \in B|_C$.
\end{enumerate}
\end{theorem}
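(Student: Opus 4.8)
The plan is to apply the criterion in Proposition~\ref{ys:Nakai}~(1), which says $X(\Delta(B))$ is Fano if and only if $(-K_{X(\Delta(B))}.V(\tau))$ is positive for every $(n-1)$-dimensional cone $\tau$. Since a building set decomposes as a disjoint union of connected building sets and the associated toric variety is a product over $B$-components, it suffices to treat the case where $B$ is connected, say on $S$, and show the equivalence with the stated combinatorial condition on pairs $I_1,I_2 \in B$. Every $(n-1)$-dimensional cone has the form $\mathbb{R}_{\geq 0}N$ for some nested set $N$ with $|N|=|S|-2$, and adding either of two distinct elements $I_1,I_2$ produces a maximal nested set. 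Thus the Fano condition becomes the requirement that the intersection number attached to every such triple $(N,I_1,I_2)$ is positive.

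\textbf{Translating intersection numbers.} The main tool is the displayed formula immediately preceding the statement, which gives
\[
(-K_{X(\Delta(B))}.V(\mathbb{R}_{\geq 0}N))=\left\{\begin{array}{ll}
k-|(B|_{I_1 \cap I_2})_\mathrm{max}|-1 & (I_1 \cup \cdots \cup I_k \in N), \\
k-|(B|_{I_1 \cap I_2})_\mathrm{max}| & (I_1 \cup \cdots \cup I_k=S), \end{array}\right.
\]
where $\{I_3,\ldots,I_k\}\subset N$ comes from Proposition~\ref{ys:pair}~(3). So Fano-ness is equivalent to demanding, for every admissible triple, that this quantity be $\geq 1$. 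The strategy is to analyze when the minimum possible value is achieved. I would first argue that the smallest value of $k$ forces the tightest constraint: the worst case is $k=2$, i.e.\ when the family $\{I_3,\ldots,I_k\}$ is empty and $I_1\cup I_2$ itself lies in $N\cup\{S\}$. In that case the formula reads $2-|(B|_{I_1\cap I_2})_\mathrm{max}|-1$ or $2-|(B|_{I_1\cap I_2})_\mathrm{max}|$ depending on whether $I_1\cup I_2\in N$ or $I_1\cup I_2=S$.

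\textbf{Deriving the two conditions.} For the direction (1)$\Rightarrow$(2), I would take $I_1,I_2\in B$ with $I_1\cap I_2\neq\emptyset$ and neither containing the other, and produce a nested set $N$ realizing the minimal configuration. If $I_1\cup I_2\neq S$ we may arrange $I_1\cup I_2\in N$ and $k=2$, giving intersection number $1-|(B|_{I_1\cap I_2})_\mathrm{max}|\leq 0$, contradicting Fano; hence $I_1\cup I_2=S=C$. Then with $I_1\cup I_2=S$ and $k=2$ the number is $2-|(B|_{I_1\cap I_2})_\mathrm{max}|$, which is positive only when $|(B|_{I_1\cap I_2})_\mathrm{max}|=1$, i.e.\ $I_1\cap I_2\in B$. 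Conversely, for (2)$\Rightarrow$(1) I would assume the two conditions and check that every intersection number is positive: condition $I_1\cup I_2=C$ means $I_1\cup\cdots\cup I_k=S$ always (forcing the second branch of the formula), and $I_1\cap I_2\in B$ means $|(B|_{I_1\cap I_2})_\mathrm{max}|=1$, so the number equals $k-1\geq 1$.

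\textbf{Main obstacle.} The delicate point is constructing, in the (1)$\Rightarrow$(2) direction, an explicit nested set $N$ with $N\cup\{I_1\},N\cup\{I_2\}\in\mathcal{N}(B)_\mathrm{max}$ that forces the minimal value $k=2$ and realizes the failing configuration; one must verify that such an $N$ genuinely exists as a maximal nested set of the prescribed cardinality $|S|-2$, using Propositions~\ref{ys:pure} and~\ref{ys:pair}. Conversely, one must confirm in the (2)$\Rightarrow$(1) direction that condition (2) really forces $I_1\cup\cdots\cup I_k=S$ for \emph{every} admissible triple, not merely the extremal one, so that the first branch of the formula never occurs. Handling these existence and universality claims carefully is where the real work lies; the arithmetic of the intersection-number formula itself is routine once the combinatorial configuration is pinned down.
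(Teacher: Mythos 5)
Your overall framework is the same as the one the paper indicates (Proposition~\ref{ys:Nakai}~(1) combined with the displayed intersection-number formula, after reducing to connected $B$), and your direction (2)$\Rightarrow$(1) is essentially sound: for a triple $(N,I_1,I_2)$ with $I_1\cap I_2\ne\emptyset$, Proposition~\ref{ys:pair}~(1) makes condition (2) applicable, $I_1\cup I_2=S$ then forces $k=2$ and the second branch, so the number is $2-1=1$; for $I_1\cap I_2=\emptyset$ the number is $k-1\geq 1$ or $k\geq 2$. (Incidentally, your worry that the first branch must ``never occur'' is misplaced: it does occur for disjoint pairs, but is harmless because the correction term $|(B|_{I_1\cap I_2})_\mathrm{max}|$ is then zero.)

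The genuine gap is in (1)$\Rightarrow$(2), and it is not the routine verification you describe: the existence claim you rely on is false for an arbitrary pair. Take $B=B(G)$ for the star $G=K_{1,4}$ with center $0$ and leaves $1,2,3,4$, and the pair $I_1=\{0,1,2\}$, $I_2=\{0,3\}$. This pair is intersecting, incomparable, and violates condition (2) (since $I_1\cup I_2\ne S$), yet there is \emph{no} nested set $N$ with $N\cup\{I_1\},N\cup\{I_2\}\in\mathcal{N}(B)_\mathrm{max}$, let alone one containing $I_1\cup I_2$. Indeed, Proposition~\ref{ys:pair}~(2) forces $\{0\}\in N$; condition (2) of Definition~\ref{ys:nested} then excludes every other singleton $\{i\}$, because $\{0\}\cup\{i\}=\{0,i\}\in B$; and every remaining element of $B\setminus\{S,I_1,I_2\}$ except $\{0,1,2,3\}$ fails condition (1) of Definition~\ref{ys:nested} against $I_1$ or $I_2$. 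So $N\subset\{\{0\},\{0,1,2,3\}\}$, which has too few elements to reach $|N|=|S|-2=3$. The variety is of course still non-Fano, but the witnessing wall comes from a \emph{different} violating pair, e.g.\ $I_1=\{0,1\}$, $I_2=\{0,2\}$ with $N=\{\{0\},\{0,1,2\},\{0,1,2,3\}\}$, giving intersection number $2-1-1=0$. Consequently, the contrapositive argument cannot proceed pair by pair: one must first select a suitable (extremal/minimal) violating pair and then carry out the construction of $N$ for that pair, and this selection-plus-construction is precisely the substance of the proof in the cited reference. As written, your necessity direction is unproved.
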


\begin{table}[htbp]
\begin{center}
\begin{tabular}{|c|c|c|}
\hline
dimension & \# of toric Fano varieties & \# of toric Fano varieties \\
& & associated to building sets \\
\hline
\hline
1 & \hphantom{00}1 & \hphantom{00}1 \\
\hline
2 & \hphantom{00}5 & \hphantom{00}5 \\
\hline
3 & \hphantom{0}18 & \hphantom{0}14 \\
\hline
4 & 124 & \hphantom{0}50 \\
\hline
5 & 866 & 161 \\
\hline
\end{tabular}
\caption{the number of toric Fano varieties associated to building sets.}
\label{ys:tbl2}
\end{center}
\end{table}

\begin{example}\label{ys:examples}
If $|S| \leq 3$, then a connected building set $B$ on $S$
is isomorphic to one of the following six types:
\begin{enumerate}
\item $\{\{1\}\}$: a point, which is understood to be Fano.
\item $\{\{1\}, \{2\}, \{1, 2\}\}$: $\mathbb{P}^1$.
\item $\{\{1\}, \{2\}, \{3\}, \{1, 2, 3\}\}$: $\mathbb{P}^2$.
\item $\{\{1\}, \{2\}, \{3\}, \{1, 2\}, \{1, 2, 3\}\}$: $\mathbb{P}^2$ blown-up at one point.
\item $\{\{1\}, \{2\}, \{3\}, \{1, 2\}, \{1, 3\}, \{1, 2, 3\}\}$:
$\mathbb{P}^2$ blown-up at two points.
\item $\{\{1\}, \{2\}, \{3\}, \{1, 2\}, \{1, 3\}, \{2, 3\}, \{1, 2, 3\}\}$:
$\mathbb{P}^2$ blown-up at three non-collinear points.
\end{enumerate}
Thus $X(\Delta(B))$ is Fano in every case.
Since the disconnected building set $\{\{1\}, \{2\},\\ \{1, 2\}, \{3\}, \{4\}, \{3, 4\}\}$
yields $\mathbb{P}^1 \times \mathbb{P}^1$,
it follows that all toric Fano varieties of dimension $\leq 2$ are obtained from building sets.
\end{example}

\begin{theorem}[{\cite[Theorem 2.4]{suya17}}]\label{ys:theorem4}
Let $B$ be a building set. Then the following are equivalent:
\begin{enumerate}
\item The toric variety $X(\Delta(B))$ is weak Fano.
\item For any $B$-component $C$ and
for any $I_1, I_2 \in B|_C$ such that $I_1 \cap I_2 \ne \emptyset,
I_1 \not\subset I_2$ and $I_2 \not\subset I_1$,
we have at least one of the following:
\begin{enumerate}
\item $I_1 \cap I_2 \in B|_C$.
\item $I_1 \cup I_2=C$ and $|(B|_{I_1 \cap I_2})_\mathrm{max}| \leq 2$.
\end{enumerate}
\end{enumerate}
In particular, all toric varieties of dimension $\leq 3$
associated to building sets are weak Fano.
\end{theorem}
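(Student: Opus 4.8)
The plan is to combine the Nakai-type criterion of Proposition \ref{ys:Nakai}(2) with the intersection number formula displayed just before the statement. First I would reduce to the connected case: since $X(\Delta(B))=\prod_{C\in B_\mathrm{max}}X(\Delta(B|_C))$ and the anticanonical class of a product of smooth projective varieties is the sum of the pullbacks of those of the factors, $X(\Delta(B))$ is weak Fano if and only if every factor $X(\Delta(B|_C))$ is; as condition (2) is also stated componentwise, it suffices to treat a connected building set $B$ on $S$. By Proposition \ref{ys:Nakai}(2) and Proposition \ref{ys:pure}, weak Fanoness is then equivalent to the nonnegativity of $(-K_{X(\Delta(B))}.V(\mathbb{R}_{\geq 0}N))$ for every nested set $N$ with $|N|=|S|-2$. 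Since $\Delta(B)$ is a nonsingular complete fan, each such $N$ is a wall contained in exactly two maximal nested sets $N\cup\{I_1\}$ and $N\cup\{I_2\}$, and Proposition \ref{ys:pair}(1) gives that $I_1,I_2$ are incomparable, so condition (2) applies to the pair $(I_1,I_2)$ whenever $I_1\cap I_2\neq\emptyset$.

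For the implication (2) $\Rightarrow$ (1) I would simply evaluate the displayed formula wall by wall. Write $E=I_1\cap I_2$ and $m=|(B|_E)_\mathrm{max}|$, and let $k$ be the integer produced by Proposition \ref{ys:pair}(3); note that $k\geq 2$ always, since the family $\{I_3,\ldots,I_k\}$ may be empty. If $E=\emptyset$ then $m=0$ and the value is at least $k-1\geq 1$. If $E\neq\emptyset$, then by (2) either (a) $E\in B$, so $m=1$ and the value is $k-2\geq 0$ or $k-1>0$; or (b) $I_1\cup I_2=S$ and $m\leq 2$, which forces $k=2$ with union $S$, giving the value $2-m\geq 0$. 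Hence every wall has nonnegative intersection number.

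For (1) $\Rightarrow$ (2) I would argue by contraposition: if a pair $I_1,I_2$ in some component violates (2), I produce a wall of negative intersection number. The violation gives $E\notin B$ (so $m\geq 2$) together with $I_1\cup I_2\neq S$ or $m\geq 3$. Set $D=I_1\cup I_2\in B$. The key step is to exhibit a single nested set $N$ with $N\cup\{I_1\},N\cup\{I_2\}\in\mathcal{N}(B)_\mathrm{max}$ and with $k=2$. I would first handle the case where $D$ equals the ground set, where any such wall has $k=2$ and union $S$, so the formula yields $2-m\leq -1$. The general case I would reduce to this by passing to the connected building set $B|_D$ (for which $I_1\cup I_2$ is the full ground set), finding the wall there, and then appending $D$ together with a maximal nested set of the part of $B$ above $D$; this makes $D\in N$, so $k=2$ and the first branch of the formula gives $1-m\leq -1$.

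The hard part will be this exchange step: constructing, for incomparable $I_1,I_2$ whose union is the ground set, one nested set whose two maximal extensions are exactly $I_1$ and $I_2$. I would build $N$ from the forced block $(B|_E)_\mathrm{max}\subseteq N$ (Proposition \ref{ys:pair}(2)) together with maximal nested sets of the restrictions $B|_{E_j}$, $B|_{I_1\setminus E}$ and $B|_{I_2\setminus E}$, discarding one maximal element on each of the $I_1$- and $I_2$-sides so that the cardinalities total $|S|-2$; the necessity of the decomposition $N=P_E\sqcup P_1\sqcup P_2$ with no element bridging $E$ and $I_i\setminus E$ follows from compatibility with both $I_1$ and $I_2$. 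The delicate point, where the building set axioms are genuinely needed, is the verification of the second nested-set condition, namely that no disjoint union of the chosen blocks lies in $B$. Finally, for the last assertion I would note that a violating pair forces $|I_1|,|I_2|\geq 3$ and $|I_1\cup I_2|\geq 4$, so the relevant component has at least four nodes; a direct check shows that when it has exactly four, one has $I_1\cup I_2=C$ and $m\leq 2$, so alternative (b) holds. Hence no violation can occur once $\dim X(\Delta(B))\leq 3$, and all such varieties are weak Fano.
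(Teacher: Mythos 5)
Your overall framework---reduction to connected $B$, then Proposition \ref{ys:Nakai}(2) applied wall by wall through the displayed intersection-number formula---is exactly the route the paper indicates (the paper itself only cites \cite{suya17} and does not reproduce the proof), and your direction (2) $\Rightarrow$ (1), as well as the final deduction for dimension $\leq 3$, are correct. The genuine gap is in (1) $\Rightarrow$ (2): you assume that for an \emph{arbitrary} pair $(I_1,I_2)$ violating (2) (reduced to the case $I_1 \cup I_2 = S$) one can construct a nested set $N$ with $N \cup \{I_1\}, N \cup \{I_2\} \in \mathcal{N}(B)_\mathrm{max}$. Such a wall need not exist, so the step you flag as ``the hard part'' is not merely delicate but false as stated. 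Take $S=\{1,\ldots,6\}$ and
\[
B=\bigl\{\{1\},\ldots,\{6\},\ \{1,3\},\ \{2,3\},\ \{1,2,3\},\ \{3,4,5,6\},\ \{1,2,3,4,5\},\ \{1,3,4,5,6\},\ \{2,3,4,5,6\},\ S\bigr\},
\]
which one checks directly is a building set. The pair $I_1=\{1,2,3,4,5\}$, $I_2=\{3,4,5,6\}$ violates (2): $E=I_1\cap I_2=\{3,4,5\}\notin B$, $(B|_E)_\mathrm{max}=\{\{3\},\{4\},\{5\}\}$ has cardinality $3$, and $I_1\cup I_2=S$. Yet no wall between $I_1$ and $I_2$ exists: by Propositions \ref{ys:pure} and \ref{ys:pair}(2), such an $N$ would consist of $\{3\},\{4\},\{5\}$ together with exactly one further element $x$ that is nested with or disjoint from both $I_1$ and $I_2$; the only candidates are $x=\{1\},\{2\},\{6\}$ (every other element of $B\setminus\{S,I_1,I_2\}$, e.g.\ $\{1,3\}$ or $\{1,3,4,5,6\}$, meets one of $I_1,I_2$ without being comparable to it), and each violates condition (2) of Definition \ref{ys:nested}, since $\{1\}\cup\{3\}=\{1,3\}\in B$, $\{2\}\cup\{3\}=\{2,3\}\in B$, and $\{3\}\cup\{4\}\cup\{5\}\cup\{6\}=I_2\in B$.

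Consequently the contrapositive cannot be run on the given violating pair: the negative wall must be located elsewhere. In the example it does exist, but only for a \emph{different} violating pair: $N=\{\{3\},\{1,3\},\{4\},\{5\}\}$ is a wall between $I_1$ and $\{1,3,4,5,6\}$, with intersection number $2-3=-1$, so the theorem itself is unharmed. This is precisely the content your sketch is missing and what the cited proof has to supply: one must select, among all violating pairs, one with a suitable extremal property (or replace the given pair by a modified one) before a construction of $N$ of the kind you describe---the forced part $(B|_E)_\mathrm{max}$ plus maximal nested sets of restrictions with discarded maximal elements---can be made to satisfy condition (2) of Definition \ref{ys:nested}. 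Note also that your reduction to $B|_D$ when $D=I_1\cup I_2\neq S$ inherits the same defect, since the pair inside $B|_D$ is again arbitrary; and even in cases where some choice of discarded elements works, your construction gives no rule for making it (in the example above, nothing distinguishes the fatal choice $\{1\}$ from any other).
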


Theorem \ref{ys:theorem1} follows immediately from Theorem \ref{ys:theorem3}.
However, it is unclear
whether Theorem \ref{ys:theorem2} can be obtained from Theorem \ref{ys:theorem4}.

\section{Reflexive polytopes associated to building sets}

\subsection{Reflexive polytopes and directed graphs}

An $n$-dimensional integral convex polytope
$P$ in $\mathbb{R}^n$ is said to be {\it reflexive}
if the origin is in the interior of $P$ and the dual
$P^*=\{u \in \mathbb{R}^n \mid \langle u, v\rangle \geq -1 \mbox{ for any } v \in P\}$
is also an integral convex polytope,
where $\langle \cdot, \cdot\rangle$ denotes
the standard inner product on $\mathbb{R}^n$.
An $n$-dimensional integral convex polytope
in an $n$-dimensional real vector space with respect to a lattice
is said to be {\it smooth Fano} if the origin is the only lattice point
in the interior and the vertices of every facet form a basis for the lattice.
Note that any smooth Fano polytope in $\mathbb{R}^n$ is a reflexive polytope.

Let $\Delta$ be a nonsingular complete fan in $\mathbb{R}^n$.
If the associated toric variety $X(\Delta)$ is weak Fano,
then the convex hull of the primitive generators
of the rays in $\Delta(1)$ is a reflexive polytope.
This correspondence induces a bijection
between isomorphism classes of toric Fano varieties
and isomorphism classes of smooth Fano polytopes.
For a building set $B$ whose associated toric variety $X(\Delta(B))$
is weak Fano, we denote by $P_B$ the corresponding reflexive polytope.

Higashitani \cite{higa15} gave a construction
of integral convex polytopes from finite connected directed graphs
(with no loops and no multiple arrows).
Let $G$ be a finite connected directed graph
whose node set is $V(G)=\{1, \ldots, n+1\}$
and whose arrow set is $A(G) \subset V(G) \times V(G)$.
For $\overrightarrow{e}=(i, j) \in A(G)$,
we define $\rho(\overrightarrow{e}) \in \mathbb{R}^{n+1}$ to be $e_i-e_j$.
We define $P_G$ to be the convex hull of $\{\rho(\overrightarrow{e}) \mid
\overrightarrow{e} \in A(G)\}$ in $\mathbb{R}^{n+1}$.
Then $P_G$ is an integral convex polytope in the hyperplane
$H=\{(x_1, \ldots, x_{n+1}) \in \mathbb{R}^{n+1} \mid x_1+\cdots+x_{n+1}=0\}$.

Higashitani also characterized finite directed graphs
whose associated integral convex polytopes are smooth Fano,
see \cite{higa15} for details.

\subsection{Reflexive polytopes associated to building sets}

The following theorem is proved by using Theorem \ref{ys:theorem3} (2).

\begin{theorem}[{\cite[Theorem 4.2]{suya16b}}]\label{ys:theorem5}
Let $B$ be a building set.
If the associated toric variety $X(\Delta(B))$ is Fano,
then there exists a finite connected directed graph $G$ such that
$P_G \subset H$ is a smooth Fano polytope
(with respect to the lattice $H \cap \mathbb{Z}^{n+1}$)
and its associated fan is isomorphic to $\Delta(B)$,
that is, there is a linear isomorphism $F:H \rightarrow \mathbb{R}^n$
such that $F(H \cap \mathbb{Z}^{n+1})=\mathbb{Z}^n$ and $F(P_G)=P_B$,
where $n$ is the dimension of $X(\Delta(B))$.
\end{theorem}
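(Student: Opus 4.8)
The plan is to read a directed graph directly off the combinatorics of $B$, matching Higashitani's vertices $\rho(\overrightarrow{e})=e_i-e_j$ of $P_G$ with the ray generators $e_I$ of $\Delta(B)$. First I would reduce to the case that $B$ is connected. If $B$ has $B$-components $C_1,\dots,C_m$, then $X(\Delta(B))=\prod_\ell X(\Delta(B|_{C_\ell}))$, so $P_B$ is the free sum (direct sum) of the $P_{B|_{C_\ell}}$. Having produced a connected directed graph $G_\ell$ on $C_\ell$ for each component, I would glue the $G_\ell$ along a single common node $v_0$; passing to the basis $\{e_u-e_{v_0}\}$ of $H\cap\mathbb{Z}^{n+1}$, the arrows of distinct $G_\ell$ then involve disjoint coordinate directions, so the resulting connected graph $G$ satisfies $P_G=\bigoplus_\ell P_{G_\ell}$, matching the free sum $P_B$. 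Thus the whole statement follows from the connected case.

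For connected $B$ on $S=\{1,\dots,n+1\}$ the ray generators of $\Delta(B)$ are exactly the $e_I$ with $I\in B\setminus\{S\}$. The key structural input I would establish is that every connected Fano building set is, after relabelling $S$, a family of \emph{circular arcs}: there is a cyclic order on $S$ in which each $I\in B\setminus\{S\}$ is an interval $\{a,a+1,\dots,b\}$ read cyclically. This is where Theorem \ref{ys:theorem3}(2) enters. If $I_1,I_2\in B$ cross (are incomparable with $I_1\cap I_2\ne\emptyset$), the theorem forces $I_1\cup I_2=S$; since then $|I_1|+|I_2|=|S|+|I_1\cap I_2|>|S|$, any two elements that do \emph{not} jointly cover $S$ must be nested or disjoint. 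Hence the subfamily of non-covering elements is laminar and can be realized by nested intervals, which fixes a linear order on $S$; the remaining covering pairs, whose members are forced to be large and to meet in an element of $B$, should then appear as the complementary wrap-around arcs once the line is closed into a circle. Carrying out this last step consistently — checking that a single cyclic order simultaneously turns every member of $B$ into an arc — is the main obstacle, and I expect it to require an induction on $|S|$ (contracting a minimal non-singleton element of $B$, or deleting a suitable node) together with a careful analysis of how the covering pairs interleave.

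Granting the arc structure, the construction is immediate. On the node set $\{1,\dots,n+1\}$ arranged in the cyclic order, assign to each arc $I=\{a,\dots,b\}$ the arrow $\overrightarrow{e}_I=(a,b+1)$ with indices modulo $n+1$, and let $G$ be the resulting directed graph. Distinct arcs give distinct arrows, there are no loops or multiple arrows (either would force $I=S$), and the singletons already produce the spanning directed cycle $1\to2\to\cdots\to(n+1)\to1$, so $G$ is connected. Define $F:H\to\mathbb{R}^n$ by $F(e_k-e_{k+1})=e_k$ for $k=1,\dots,n$; since $e_1-e_2,\dots,e_n-e_{n+1}$ form a $\mathbb{Z}$-basis of $H\cap\mathbb{Z}^{n+1}$, the map $F$ is a lattice isomorphism onto $\mathbb{Z}^n$. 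For a non-wrapping arc ($1\le a\le b\le n$), telescoping gives
\[
F(\rho(\overrightarrow{e}_I))=F(e_a-e_{b+1})=\sum_{k=a}^{b}e_k=e_I,
\]
while for a wrapping arc the same computation yields $-\sum_{k\notin I}e_k=-e_{I^c}=e_I$, using $e_S=0$. Thus $F$ carries the vertex set $\{\rho(\overrightarrow{e}_I)\}$ of $P_G$ bijectively onto the ray generators $\{e_I\}$ of $\Delta(B)$, so $F(P_G)=P_B$ and $F(H\cap\mathbb{Z}^{n+1})=\mathbb{Z}^n$. Finally, since $X(\Delta(B))$ is Fano, $P_B$ is a smooth Fano polytope; as $F$ is a lattice isomorphism, $P_G$ is smooth Fano as well and its associated fan is carried onto $\Delta(B)$, which completes the argument.
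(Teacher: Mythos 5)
Your reduction to connected $B$ (gluing the component graphs at one node to realize the free sum of the $P_{B|_{C_\ell}}$) and your endgame (arcs $I=\{a,\dots,b\}\mapsto$ arrows $(a,b+1)$, plus the telescoping lattice isomorphism $F(e_k-e_{k+1})=e_k$) are both correct. But the proposal has a genuine gap at exactly the step you flag as ``the main obstacle'': the claim that a connected building set with $X(\Delta(B))$ Fano admits a single cyclic order on $S$ making every $I\in B\setminus\{S\}$ an arc is never proved, only conjectured, and it is essentially the entire content of the theorem --- everything after it is a short verification. Worse, the two-step plan you sketch for it would fail as stated: you cannot first realize the laminar part by nested intervals and then close the line into a circle, because the crossing elements dictate where the laminar pieces must sit relative to one another (for instance, the elements of $B$ lying inside $M_1\cap M_2$ for a crossing pair of maximal elements $M_1,M_2$ must be placed between the elements lying inside $S\setminus M_1$ and those lying inside $S\setminus M_2$); an interval realization chosen without reference to the crossing elements need not extend to a valid cyclic order. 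So the cyclic order has to be built out of the crossing structure, not grafted onto a pre-chosen linear order.

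The good news is that your structural claim is true, and Theorem \ref{ys:theorem3} (2) yields it by a finite case analysis rather than the induction you anticipate; here is one way to close the gap. First, any element occurring in a crossing pair is maximal in $B\setminus\{S\}$: if $J_1\subsetneq M$ with $M$ maximal and $J_1,J_2$ cross, then $M$ and $J_2$ also cross, so $K=M\cap J_2\in B$ by the Fano condition; now $J_1\cap K\ne\emptyset$, $J_1\not\subset K$ (else $J_1\subset J_2$), and $K\not\subset J_1$ (else $M=J_1\cup(M\cap J_2)=J_1$, using $J_1\cup J_2=S$), so $J_1$ and $K$ cross, forcing $S=J_1\cup K\subset M$, a contradiction. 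Second, if any crossing exists then all maximal elements $M_1,\dots,M_p$ of $B\setminus\{S\}$ pairwise cross, so the complements $A_i=S\setminus M_i$ are pairwise disjoint and all $M_i\cap M_j\in B$; if $p\ge 4$, or $p=3$ and $S\ne A_1\cup A_2\cup A_3$, then $M_1\cap M_2$ and $M_1\cap M_3$ themselves cross with union contained in $M_1\ne S$, contradicting the Fano condition. Hence either $B$ is laminar, or $p=2$, or $p=3$ with $S=A_1\sqcup A_2\sqcup A_3$ and $M_i=S\setminus A_i$. Third, in the latter two cases every element of $B$ other than $S$ and the $M_i$ lies inside one of the parts ($A_1$, $A_2$, $Z=M_1\cap M_2$ when $p=2$; $A_1,A_2,A_3$ when $p=3$), and the restriction of $B$ to each part is laminar, hence realizable by intervals inside that part. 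Concatenating the parts around a circle (for $p=2$: $S\setminus M_2$, then $Z$, then $S\setminus M_1$; for $p=3$: $A_1,A_2,A_3$) makes every element of $B\setminus\{S\}$ an arc, after which your construction of $G$ and $F$ finishes the proof. Until an argument of this kind is supplied, your proposal is an outline rather than a proof.
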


\begin{remark}
Higashitani showed that any pseudo-symmetric smooth Fano polytope
is obtained from a finite directed graph (see \cite[Theorem 3.3]{higa15}).
On the other hand, there exists a building set $B$ such that $X(\Delta(B))$ is Fano
and the corresponding smooth Fano polytope is not pseudo-symmetric.
For example, the toric variety in Example \ref{ys:example1} is Fano
while the corresponding smooth Fano polytope is not pseudo-symmetric.
\end{remark}

\begin{example}
The converse of Theorem \ref{ys:theorem5} is not true.
The finite directed graphs in Figure \ref{ys:directedgraph2} yield smooth Fano 3-polytopes.
However, these polytopes cannot be obtained from building sets.
\begin{figure}[htbp]
\begin{center}
\includegraphics[width=4.5cm]{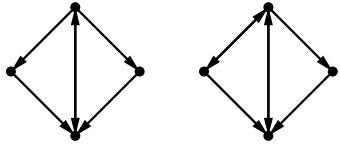}
\end{center}
\caption{directed graphs whose smooth Fano polytopes cannot be obtained from building sets.}
\label{ys:directedgraph2}
\end{figure}
\end{example}

\begin{remark}
In contrast to Theorem \ref{ys:theorem5},
there exist infinitely many reflexive polytopes associated to building sets
that cannot be obtained from finite directed graphs
(note that such reflexive polytopes are not smooth Fano).
On the other hand,
there also exists a reflexive polytope associated to a finite directed graph
that is not smooth Fano and cannot be obtained from any building set,
see \cite{suya17} for details.
\end{remark}

\section{Properties of toric Fano varieties associated to building sets}

\subsection{Extremal contractions}

Let $B$ be a connected building set on $S$
whose associated toric variety $X(\Delta(B))$ is Fano.
Let $I_1, I_2 \in B$ with $I_1 \ne I_2$ and $N \in \mathcal{N}(B)$
such that $N \cup \{I_1\}, N \cup \{I_2\} \in \mathcal{N}(B)_\mathrm{max}$
and $V(\mathbb{R}_{\geq0}N)$ is an extremal curve.
We find the wall relation associated to $V(\mathbb{R}_{\geq0}N)$.
We refer to \cite{reid83} for details on extremal contractions of toric varieties.

Suppose $I_1 \cap I_2=\emptyset$. By Proposition \ref{ys:pair} (3),
there exists $\{I_3, \ldots, I_k\} \subset N$ such that
$I_1 \cup I_2, I_3, \ldots, I_k$ are pairwise disjoint
and $I_1 \cup \cdots \cup I_k \in N \cup B_\mathrm{max}=N \cup \{S\}$.
If $I_1 \cup \cdots \cup I_k \in N$, then the wall relation is
$e_{I_1}+e_{I_2}+e_{I_3}+\cdots+e_{I_k}-e_{I_1 \cup \cdots \cup I_k}=0$.
If $I_1 \cup \cdots \cup I_k=S$, then the wall relation is
$e_{I_1}+e_{I_2}+e_{I_3}+\cdots+e_{I_k}=0$.

Suppose $I_1 \cap I_2 \ne \emptyset$. By Proposition \ref{ys:pair} (1),
we have $I_1 \not\subset I_2$ and $I_2 \not\subset I_1$.
Applying Theorem \ref{ys:theorem3} (2) to $I_1$ and $I_2$,
we have $I_1 \cup I_2=S$ and $I_1 \cap I_2 \in B$.
By Proposition \ref{ys:pair} (2),
we have $\{I_1 \cap I_2\}=(B|_{I_1 \cap I_2})_\mathrm{max} \subset N$.
Since $e_{I_1 \cup I_2}=e_S=0$, the wall relation is $e_{I_1}+e_{I_2}-e_{I_1 \cap I_2}=0$.

In every case, at most one of the coefficients
in the wall relation associated to $V(\mathbb{R}_{\geq0}N)$ is negative.
This implies the following corollary:

\begin{corollary}
Let $B$ be a building set.
If the associated toric variety $X(\Delta(B))$ is Fano,
then there are no small extremal contractions of $X(\Delta(B))$.
\end{corollary}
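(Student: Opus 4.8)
The plan is to recast the statement ``$X(\Delta(B))$ has no small extremal contractions'' as a purely combinatorial condition on wall relations, and then to read off the conclusion directly from the computation carried out immediately above the corollary.

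First I would recall the structure of toric extremal contractions, following Reid \cite{reid83}. Since $X(\Delta(B))$ is Fano it is a nonsingular projective, hence $\mathbb{Q}$-factorial, toric variety, so its Mori cone $\overline{NE}(X(\Delta(B)))$ is rational polyhedral and each of its extremal rays $R$ is generated by the class of a torus-invariant curve $V(\tau)$ for some wall $\tau \in \Delta(B)(n-1)$. To $R$ one attaches the extremal contraction $\mathrm{cont}_R$ together with the wall relation $v+v'+a_1v_1+\cdots+a_{n-1}v_{n-1}=0$ of Proposition \ref{ys:intersection number}. The key input from toric Mori theory is the sign dichotomy classifying $\mathrm{cont}_R$: writing $U^-$ for the set of ray generators occurring with a \emph{negative} coefficient, $\mathrm{cont}_R$ is of fiber type when $U^-=\emptyset$, divisorial (contracting the prime divisor $V(\rho)$, where $\{\rho\}=U^-$) when $|U^-|=1$, and \emph{small} precisely when $|U^-|\geq 2$. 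Thus it suffices to prove that every extremal wall relation of $X(\Delta(B))$ has at most one negative coefficient.

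For a connected building set this is exactly the content of the computation preceding the corollary. Taking $I_1, I_2 \in B$ and $N \in \mathcal{N}(B)$ with $N \cup \{I_1\}, N \cup \{I_2\} \in \mathcal{N}(B)_\mathrm{max}$ and $V(\mathbb{R}_{\geq 0}N)$ extremal, the cases $I_1 \cap I_2=\emptyset$ and $I_1 \cap I_2 \ne \emptyset$ (the latter handled by Theorem \ref{ys:theorem3} (2) and Proposition \ref{ys:pair}) produce the three wall relations $e_{I_1}+e_{I_2}+e_{I_3}+\cdots+e_{I_k}-e_{I_1 \cup \cdots \cup I_k}=0$, $e_{I_1}+e_{I_2}+e_{I_3}+\cdots+e_{I_k}=0$, and $e_{I_1}+e_{I_2}-e_{I_1 \cap I_2}=0$, which exhaust all possibilities. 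As already observed, in each of these at most one coefficient is negative, so $|U^-|\leq 1$ for every extremal ray, and by the dichotomy no $\mathrm{cont}_R$ is small.

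Finally I would remove the connectedness hypothesis. For a general building set one has $X(\Delta(B))=\prod_{C \in B_\mathrm{max}} X(\Delta(B|_C))$, and since the anticanonical class of a product is the exterior sum of the anticanonical classes of the factors, $X(\Delta(B))$ being Fano forces each $X(\Delta(B|_C))$ to be Fano. Every extremal ray of a product of smooth projective toric varieties comes from an extremal ray of a single factor, and its contraction is that factor's contraction times the identity on the remaining factors; such a morphism is small if and only if the factor's contraction is small. The connected case therefore gives the general statement. I expect the only genuine subtlety to be invoking the correct form of the Reid dichotomy, namely pinning down that ``$|U^-|\geq 2$'' is exactly the small case, since the combinatorial computation to which it is applied has already been carried out; the product reduction is routine.
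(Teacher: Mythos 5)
Your proposal is correct and follows essentially the same route as the paper: compute the three possible wall relations for an extremal curve using Proposition \ref{ys:pair} and Theorem \ref{ys:theorem3} (2), observe that each has at most one negative coefficient, and conclude via Reid's classification that no contraction is small. You even make explicit two points the paper leaves implicit (the precise form of the dichotomy $|U^-|\geq 2 \Leftrightarrow$ small, and the reduction of the disconnected case to connected factors), which is a faithful completion rather than a different argument.
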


\subsection{Toric 2-Fano varieties associated to building sets}

2-Fano varieties were introduced by de Jong--Starr \cite{dejo07}.

\begin{definition}
A Fano variety $X$ is said to be {\bf 2-Fano}
if the second Chern character $\mathrm{ch}_2(X)=(c_1(X)^2-2c_2(X))/2$ is ample,
that is, the intersection number $(\mathrm{ch}_2(X). S)$
is positive for any surface $S$ on $X$.
\end{definition}

At present, the only known examples of toric 2-Fano varieties are projective spaces.
In this subsection, we prove the following theorem:

\begin{theorem}\label{ys:theorem6}
Let $B$ be a building set whose associated toric variety $X(\Delta(B))$ is Fano.
Then $X(\Delta(B))$ is 2-Fano if and only if it is a projective space.
\end{theorem}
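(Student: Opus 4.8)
The plan is to reduce the claim to a computation of the second Chern character against torus-invariant surfaces, then combine the Fano classification from Theorem \ref{ys:theorem3} with a local analysis of the fan $\Delta(B)$. First I would recall the toric formula for $\mathrm{ch}_2$. On a nonsingular complete toric variety $X(\Delta)$, the second Chern character can be computed intersection-theoretically against the torus-invariant surfaces $V(\omega)$, where $\omega$ ranges over the $(n-2)$-dimensional cones of $\Delta$. As in Proposition \ref{ys:intersection number}, for each such $\omega$ one writes down the primitive generators of the rays of $\omega$ together with the generators $v, v'$ of the two rays completing $\omega$ to the two maximal cones adjacent to a wall, and the intersection number $(\mathrm{ch}_2(X).V(\omega))$ becomes an explicit quadratic expression in the coefficients of the corresponding linear (wall) relations. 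The key is that the ``only if'' direction does not require the general formula in full generality: it suffices to exhibit, for any building set $B$ with $X(\Delta(B))$ Fano but not a product of projective spaces, a single torus-invariant surface on which $\mathrm{ch}_2$ is not positive.

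Because of the product decomposition $X(\Delta(B))=\prod_{C \in B_\mathrm{max}}X(\Delta(B|_C))$ and because $\mathrm{ch}_2$ of a product is controlled by the factors, I would first reduce to the connected case and observe that a product of at least two positive-dimensional toric Fano varieties is never $2$-Fano (one may restrict to a surface lying in a fiber $\{pt\} \times S$, where the first Chern class degenerates, killing positivity of $\mathrm{ch}_2$). This immediately forces $B$ to be connected if $X(\Delta(B))$ is to be $2$-Fano of positive dimension. So the core case is: $B$ connected on $S$ with $X(\Delta(B))$ Fano, and I want to show $2$-Fano forces $B$ to be the building set $\{\{i\} : i \in S\} \cup \{S\}$ yielding $\mathbb{P}^n$.

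The main step is then the forward implication. Assuming $B$ is connected and $X(\Delta(B))$ is Fano but $B \ne \{\{i\}\} \cup \{S\}$, there must exist some proper element $I \in B$ with $1 < |I| < |S|$. Using such an $I$ I would build an explicit $(n-2)$-dimensional cone $\omega = \mathbb{R}_{\geq 0}N$ for a suitable nested set $N$ of size $n-2$, and compute the two wall relations meeting along the two walls of $\omega$, exactly in the style of the intersection-number computation displayed before Section 3. From the Fano condition (Theorem \ref{ys:theorem3}(2)), whenever $I_1 \cap I_2 \ne \emptyset$ with neither contained in the other we have $I_1 \cup I_2 = S$ and $I_1 \cap I_2 \in B$, which pins down the wall relations to have the simple shape $e_{I_1}+e_{I_2}-e_{I_1 \cap I_2}=0$ or, in the disjoint case, $e_{I_1}+e_{I_2}+\cdots-e_{I_1 \cup \cdots}=0$. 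Plugging these coefficients into the quadratic $\mathrm{ch}_2$-formula should yield a value that is $\leq 0$, contradicting the $2$-Fano hypothesis. Conversely, the ``if'' direction is the known fact that projective spaces are $2$-Fano, which I would cite.

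The hard part will be the bookkeeping in the quadratic $\mathrm{ch}_2$-computation: unlike the anticanonical case, $\mathrm{ch}_2 = (c_1^2 - 2c_2)/2$ is quadratic in the ray data, so the contribution from a surface $V(\omega)$ mixes self-intersection terms coming from the two adjacent walls and cross terms, and one must be careful to track the coefficients $a_i$ of both wall relations simultaneously and to verify that the chosen nested set $N$ genuinely has the property that $N \cup \{I_1\}$ and $N \cup \{I_2\}$ are maximal nested sets so that Proposition \ref{ys:pair} applies along each wall. The delicate point is the \emph{choice} of the witnessing surface: one wants $I$ to be chosen minimal (or the pair $I_1, I_2$ chosen so that $I_1 \cap I_2$ is as small as possible) so that the resulting wall relations are forced into the rigid Fano form and the quadratic evaluates to a manifestly nonpositive integer. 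I expect that a minimal proper $I$ produces a surface isomorphic to a Hirzebruch surface or a blow-up of $\mathbb{P}^2$ sitting inside $X(\Delta(B))$, on which $\mathrm{ch}_2$ is visibly nonpositive, and that this single local obstruction suffices to exclude $2$-Fanoness for every connected $B$ other than the one giving $\mathbb{P}^n$.
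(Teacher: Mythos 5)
Your overall strategy coincides with the paper's: prove only the necessity, use the fact that a product of positive-dimensional varieties is never 2-Fano (the paper cites \cite{arau13}) to reduce to a connected building set, and then exhibit a single torus-invariant surface on which $\mathrm{ch}_2$ has nonpositive degree, using Theorem \ref{ys:theorem3}(2) to control the wall relations. However, there are two genuine gaps. First, you have no actual mechanism for computing $(\mathrm{ch}_2(X).V(\omega))$. You assert it is ``an explicit quadratic expression'' in the coefficients of \emph{two} wall relations, ``in the style of'' Proposition \ref{ys:intersection number}; but that proposition concerns curves and the linear class $-K_X$, and it does not extend routinely to the quadratic class $\mathrm{ch}_2$. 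An $(n-2)$-dimensional cone $\omega$ lies on at least three walls (its star is a complete two-dimensional fan), so ``the two walls of $\omega$'' is not meaningful; likewise, with $|N|=n-2$ the sets $N \cup \{I_1\}$, $N \cup \{I_2\}$ have cardinality $n-1$ and can never be maximal nested sets, so Proposition \ref{ys:pair} does not apply in the form you invoke. The paper's computation rests on an imported result, Theorem \ref{ys:Hirzebruch} from \cite{sato12}: when $V(\mathbb{R}_{\geq0}N)$ is a Hirzebruch surface one has $I_{S/X}=aI_{C_\mathrm{fib}/X}^2+2I_{C_\mathrm{fib}/X}I_{C_\mathrm{neg}/X}$, which combined with $\mathrm{ch}_2=(D_1^2+\cdots+D_m^2)/2$ converts two wall relations into the number $(\mathrm{ch}_2.S)$. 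Without this theorem (or a substitute) your quadratic formula does not exist. Note also that $(\mathrm{ch}_2(X).S)$ depends on the embedding of $S$, not merely its isomorphism type, so saying the surface is ``a Hirzebruch surface or a blow-up of $\mathbb{P}^2$, on which $\mathrm{ch}_2$ is visibly nonpositive'' conflates the ambient class with an intrinsic one.

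Second, the construction of the witnessing surface --- which is the bulk of the paper's proof --- is left entirely open, and your one concrete suggestion points the wrong way: you propose choosing $I$ \emph{minimal} (or $I_1 \cap I_2$ minimal), whereas the paper takes $I$ to be a \emph{maximal} element of $B'=B \setminus \{\{1\}, \ldots, \{n+1\}, S\}$ and splits into cases according to whether $|I|=n$ or $|I| \leq n-1$, and whether some element of $B'$ contains $n+1$. Maximality of $I$ is used repeatedly and essentially in verifying that the four candidate families are nested sets: for instance, Case 2 derives a contradiction from $I \subsetneq I \cup J_1 \cup \cdots \cup J_u \in B'$, and Subcase 1.1 needs that no element of $B'$ contains $n+1$. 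Moreover, the Fano hypothesis enters not only through the shape of wall relations but through Theorem \ref{ys:theorem3}(2) forcing $I$ (resp.\ $I \cap J$, $S \setminus I$) to be the \emph{disjoint union} of the maximal elements of the restricted building set; this is exactly what makes the constructed surface a Hirzebruch surface of degree one (giving $(\mathrm{ch}_2.S)=0$) or $\mathbb{P}^1 \times \mathbb{P}^1$ (giving $(\mathrm{ch}_2.S)=-1$). You also skip the two-dimensional case, which the paper settles separately by Noether's theorem. These verifications are not ``bookkeeping''; the statements ``I expect'' and ``should yield a value $\leq 0$'' stand precisely where the proof has to be done.
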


Let $\Delta$ be a nonsingular complete fan in $\mathbb{R}^n$
and $v_1, \ldots, v_m$ be the primitive generators of the rays in $\Delta(1)$.
We put $D_i=V(\mathbb{R}_{\geq0}v_i)$ for $i=1, \ldots, m$.
For a torus-invariant subvariety $Y \subset X(\Delta)$ of codimension $l$, we put
\[
I_{Y/X(\Delta)}=\sum_{(i_1, \ldots, i_l) \in \{1, \ldots, m\}^l}
(D_{i_1} \cdots D_{i_l}.Y)X_{i_1} \cdots X_{i_l}
\in \mathbb{Z}[X_1, \ldots, X_m].
\]

\begin{example}
Let $v_{i_1}, \ldots, v_{i_{n-1}}, v_{j_1}, v_{j_2}$ be distinct primitive generators
such that
$\tau=\mathbb{R}_{\geq0}v_{i_1}+\cdots+\mathbb{R}_{\geq0}v_{i_{n-1}} \in \Delta(n-1)$
and $\tau+\mathbb{R}_{\geq0}v_{j_1}, \tau+\mathbb{R}_{\geq0}v_{j_2} \in \Delta(n)$.
If $v_{j_1}+v_{j_2}+a_1v_{i_1}+\cdots+a_{n-1}v_{i_{n-1}}=0$ for $a_1, \ldots, a_{n-1} \in \mathbb{Z}$,
then $I_{V(\tau)/X(\Delta)}=X_{j_1}+X_{j_2}+a_1X_{i_1}+\cdots+a_{n-1}X_{i_{n-1}}$.
\end{example}

\begin{theorem}[{\cite[Theorem 3.5]{sato12}}]\label{ys:Hirzebruch}
Let $X(\Delta)$ be a nonsingular projective toric variety
and let $S$ be a torus-invariant surface on $X(\Delta)$
isomorphic to a Hirzebruch surface of degree $a \geq 0$.
Then $I_{S/X(\Delta)}=aI_{C_\mathrm{fib}/X(\Delta)}^2
+2I_{C_\mathrm{fib}/X(\Delta)}I_{C_\mathrm{neg}/X(\Delta)}$,
where $C_\mathrm{fib}$ is a fiber of the projection $S \rightarrow \mathbb{P}^1$
and $C_\mathrm{neg}$ is the negative section of $S$.
\end{theorem}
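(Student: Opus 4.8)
The plan is to reduce every intersection number occurring as a coefficient of the three polynomials $I_{S/X(\Delta)}$, $I_{C_\mathrm{fib}/X(\Delta)}$ and $I_{C_\mathrm{neg}/X(\Delta)}$ to an intersection number computed on the surface $S$ itself, and then to use the fact that the Picard group of a Hirzebruch surface has rank two, so that the asserted identity collapses to an elementary identity between two linear forms. First I would record the intersection theory of $S \cong \mathbb{F}_a$: writing $f=[C_\mathrm{fib}]$ and $s=[C_\mathrm{neg}]$, one has $\mathrm{Pic}(S)=\mathbb{Z}f\oplus\mathbb{Z}s$ with $f^2=0$, $(f\cdot s)=1$ and $s^2=-a$. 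Let $\iota\colon S\hookrightarrow X(\Delta)$ denote the inclusion. For each $i$ the pullback $\iota^*D_i$ is a well-defined class in $\mathrm{Pic}(S)$, so I may write $\iota^*D_i=\alpha_i f+\beta_i s$ with $\alpha_i,\beta_i\in\mathbb{Z}$, and I set $A=\sum_{i=1}^m\alpha_i X_i$ and $B=\sum_{i=1}^m\beta_i X_i$.

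Next I would compute the coefficients of each polynomial by means of the projection formula, which identifies the intersection numbers on $X(\Delta)$ defining the $I$-polynomials with intersection numbers on $S$. Since $\iota_*[S]=[S]$, this gives $(D_i\cdot C_\mathrm{fib})=(\iota^*D_i\cdot f)_S$, $(D_i\cdot C_\mathrm{neg})=(\iota^*D_i\cdot s)_S$, and $(D_iD_j\cdot S)=(\iota^*D_i\cdot\iota^*D_j)_S$. Evaluating the right-hand sides with the three intersection numbers of $\mathbb{F}_a$ recorded above yields $\beta_i$, $\alpha_i-a\beta_i$ and $\alpha_i\beta_j+\alpha_j\beta_i-a\beta_i\beta_j$ respectively, and summing the associated monomials over all indices (over ordered pairs in the degree-two case, as in the definition) gives
\[
I_{C_\mathrm{fib}/X(\Delta)}=B,\qquad
I_{C_\mathrm{neg}/X(\Delta)}=A-aB,\qquad
I_{S/X(\Delta)}=2AB-aB^2.
\]

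Finally I would substitute these into the right-hand side of the claimed formula and observe
\[
a\,I_{C_\mathrm{fib}/X(\Delta)}^2+2\,I_{C_\mathrm{fib}/X(\Delta)}\,I_{C_\mathrm{neg}/X(\Delta)}
=aB^2+2B(A-aB)=2AB-aB^2=I_{S/X(\Delta)},
\]
which is exactly the assertion. The step requiring the most care is the reduction via the projection formula from $X(\Delta)$ to $S$, together with the observation that $\iota^*D_i$ is defined for every $D_i$ (not only for the four toric boundary divisors of $S$), so that $A$ and $B$ are genuine linear forms in all the variables. I do not expect a deeper obstacle: the degenerate case $a=0$, where $S\cong\mathbb{P}^1\times\mathbb{P}^1$ and $s^2=0$, is covered by the same computation verbatim, and once the coefficients are expressed through $A$ and $B$ the entire statement reduces to the quadratic identity $2AB-aB^2=aB^2+2B(A-aB)$, into which all of the combinatorial information about the fan has been absorbed.
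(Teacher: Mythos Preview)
The paper does not supply its own proof of this theorem: it is quoted verbatim from \cite{sato12} and used as a black box in the proof of Theorem~\ref{ys:theorem6}. There is therefore nothing in the present paper to compare your argument against.

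That said, your argument is correct and is essentially the standard one. The projection formula reduces each coefficient $(D_i\cdot C_{\mathrm{fib}})$, $(D_i\cdot C_{\mathrm{neg}})$, $(D_iD_j\cdot S)$ to an intersection number on $S$, and since $\mathrm{Pic}(\mathbb{F}_a)$ is generated by $f$ and $s$ with the relations $f^2=0$, $f\cdot s=1$, $s^2=-a$, writing $\iota^*D_i=\alpha_i f+\beta_i s$ immediately yields $I_{C_{\mathrm{fib}}/X(\Delta)}=B$, $I_{C_{\mathrm{neg}}/X(\Delta)}=A-aB$, and $I_{S/X(\Delta)}=2AB-aB^2$, after which the identity is a one-line check. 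Your remark that $\iota^*D_i$ is defined for every torus-invariant prime divisor $D_i$ (not only those meeting $S$ properly) is well taken: each $D_i$ is Cartier on the smooth variety $X(\Delta)$, so $\iota^*\mathcal{O}(D_i)$ is a line bundle on $S$ regardless of whether $S\subset D_i$, and the projection formula applies at the level of divisor classes.
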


For a nonsingular complete toric variety $X(\Delta)$,
it is known that $\mathrm{ch}_2(X(\Delta))=(D_1^2+\cdots+D_m^2)/2$.
Hence if $I_{S/X(\Delta)}$ has an expression of the form
$\sum_{1 \leq i \leq j \leq m}a_{ij}X_iX_j$,
then we have $(\mathrm{ch}_2(X(\Delta)). S)=(a_{11}+\cdots+a_{mm})/2$.

\begin{proof}[Proof of Theorem \ref{ys:theorem6}]
We only need to prove the necessity.
Suppose that $X(\Delta(B))$ is not a projective space.
If $X(\Delta(B))$ is a surface,
then Noether's theorem implies that $\mathrm{ch}_2(X(\Delta(B)))$ is not ample.
Since a product of nonsingular projective varieties
is not 2-Fano (see \cite[Lemma 17]{arau13}),
we may assume that $n \geq 3$
and $B$ is a connected building set on $S=\{1, \ldots, n+1\}$.
By the assumption,
$B'=B \setminus \{\{1\}, \ldots, \{n+1\}, S\}$ is nonempty.
For a torus-invariant subvariety $Y$ of $X(\Delta(B))$,
we regard $I_{Y/X(\Delta(B))}$ as a polynomial
in the polynomial ring $\mathbb{Z}[X_I \mid I \in B \setminus \{S\}]$.

{\it Case 1}. Suppose that every maximal element of $B'$ has cardinality $n$.
We pick a maximal element $I$ of $B'$.
We may assume $I=\{1, \ldots, n\}$.

{\it Subcase 1.1}. Suppose that there is no element of $B'$ containing $n+1$.
Let $K_1, \ldots, K_r$ be all maximal elements of $B|_I \setminus \{I\}$.
Note that $r \geq 2$.
Since $X(\Delta(B))$ is Fano,
Theorem \ref{ys:theorem3} (2) implies that $I$ is the disjoint union of $K_1, \ldots, K_r$.
We pick $N_i \in \mathcal{N}(B|_{K_i})_\mathrm{max}$ for each $i=1, \ldots, r$.
Let
\[
N=N_1 \cup \cdots \cup N_r \cup \{K_3, \ldots, K_r\}.
\]

\begin{cl}
The families
$N \cup \{I, K_1\}, N \cup \{K_1, \{n+1\}\}, N \cup \{\{n+1\}, K_2\}, N \cup \{K_2, I\}$
are maximal nested sets of $B$.
\end{cl}

\begin{proof}
For each family, condition (1) of Definition \ref{ys:nested} is obviously satisfied.
We check condition (2).
Suppose that $I_1, \ldots, I_s \in N_1 \cup \cdots \cup N_r \cup \{K_1, K_3, \ldots, K_r\}$
such that $I_1, \ldots, I_s$ are pairwise disjoint.
We have $I_1 \cup \cdots \cup I_s \subsetneq I$,
since if $I_1 \cup \cdots \cup I_s=I$
then $K_2$ is the disjoint union of $I_{i_1}, \ldots, I_{i_t}$
for some $1 \leq i_1 < \cdots < i_t \leq s$ with $\{I_{i_1}, \ldots, I_{i_t}\} \subset N_2$,
which contradicts that $N_2$ is a nested set.

Assume that $N \cup \{I, K_1\}$ does not satisfy condition (2) for contradiction.
Since $I$ is a unique maximal element of $N \cup \{I, K_1\}$,
we may assume that $s \geq 2$ and $I_1 \cup \cdots \cup I_s \in B$.
If $K_i=I_k$ for some $i, k$,
then $K_i \subsetneq I_1 \cup \cdots \cup I_s \in B|_I \setminus \{I\}$,
a contradiction.
If $I_1, \ldots, I_s \in N_1 \cup \cdots \cup N_r$,
then $K_i \supset I_1$ for some $i$
and thus $K_i \subsetneq I_1 \cup \cdots \cup I_s \cup K_i \in B|_I \setminus \{I\}$,
a contradiction.
Therefore $N \cup \{I, K_1\}$ is a nested set.

We show that $N \cup \{K_1, \{n+1\}\}$ satisfies condition (2).
If $s \geq 2$ and $I_1 \cup \cdots \cup I_s \in B$,
then a similar argument leads to a contradiction.
If $I_1 \cup \cdots \cup I_s \cup \{n+1\} \in B$,
then it contradicts that there is no element of $B'$ containing $n+1$.
Therefore $N \cup \{K_1, \{n+1\}\}$ is a nested set.

Similarly, $N \cup \{\{n+1\}, K_2\}$ and $N \cup \{K_2, I\}$ are nested sets.
Each nested set is maximal since it has cardinality $n$.
This completes the proof of the claim.
\end{proof}

Since $e_{K_1}+e_{K_2}+e_{K_3}+\cdots+e_{K_r}-e_I=0$ and $e_I+e_{\{n+1\}}=0$,
the surface $V(\mathbb{R}_{\geq0}N)$ is isomorphic to a Hirzebruch surface of degree one
and $C_\mathrm{neg}=V(\mathbb{R}_{\geq0}(N \cup \{I\}))$.
We choose $C_\mathrm{fib}=V(\mathbb{R}_{\geq0}(N \cup \{K_1\}))$.
Since
\begin{align*}
I_{C_\mathrm{neg}/X(\Delta(B))}&=X_{K_1}+X_{K_2}+X_{K_3}+\cdots+X_{K_r}-X_I,\\
I_{C_\mathrm{fib}/X(\Delta(B))}&=X_I+X_{\{n+1\}},\\
I_{V(\mathbb{R}_{\geq0}N)/X(\Delta(B))}
&=(X_I+X_{\{n+1\}})^2+2(X_I+X_{\{n+1\}})(X_{K_1}+\cdots+X_{K_r}-X_I),
\end{align*}
we have
$(\mathrm{ch}_2(X(\Delta(B))).V(\mathbb{R}_{\geq0}N))=(1+1-2)/2=0$.

{\it Subcase 1.2}. Suppose that there exists an element of $B'$ containing $n+1$.
We pick a maximal element $J$ of $B'$ containing $n+1$.
We may assume $J=\{1, \ldots, n-1, n+1\}$.
Since $X(\Delta(B))$ is Fano,
Theorem \ref{ys:theorem3} (2) implies $\{1, \ldots, n-1\}=I \cap J \in B$.
Let $K_1, \ldots, K_r$ be all maximal elements of $B|_{I \cap J} \setminus \{I \cap J\}$.
Note that $r \geq 2$.
Theorem \ref{ys:theorem3} (2) implies that $I \cap J$ is the disjoint union of $K_1, \ldots, K_r$.
We pick $N_i \in \mathcal{N}(B|_{K_i})_\mathrm{max}$ for each $i=1, \ldots, r$.
Let
\[
N=N_1 \cup \cdots \cup N_r \cup \{K_3, \ldots, K_r, \{n\}\}.
\]

\begin{cl}
The families
$N \cup \{I, K_1\}, N \cup \{K_1, \{n+1\}\}, N \cup \{\{n+1\}, K_2\}, N \cup \{K_2, I\}$
are maximal nested sets of $B$.
\end{cl}

\begin{proof}
It suffices to show that $N \cup \{I, K_1\}$ and $N \cup \{K_1, \{n+1\}\}$ satisfy
condition (2) of Definition \ref{ys:nested}.
Suppose that $I_1, \ldots, I_s \in N_1 \cup \cdots \cup N_r \cup \{K_1, K_3, \ldots, K_r\}$
such that $I_1, \ldots, I_s$ are pairwise disjoint.
Note that $I_1 \cup \cdots \cup I_s \subsetneq I \cap J$.

We show that $N \cup \{I, K_1\}$ satisfies condition (2).
If $s \geq 2$ and $I_1 \cup \cdots \cup I_s \in B$,
then an argument similar to the proof of Claim 1 leads to a contradiction.
Suppose $I_1 \cup \cdots \cup I_s \cup \{n\} \in B$.
Applying Theorem \ref{ys:theorem3} (2)
to $I_1 \cup \cdots \cup I_s \cup \{n\}$ and $I \cap J$,
we have $S=(I_1 \cup \cdots \cup I_s \cup \{n\}) \cup (I \cap J)=I$, a contradiction.
Since $I$ is a unique maximal element of $N \cup \{I, K_1\}$,
it follows that $N \cup \{I, K_1\}$ is a nested set.

We show that $N \cup \{K_1, \{n+1\}\}$ satisfies condition (2).
If $I_1 \cup \cdots \cup I_s \in B$ with $s \geq 2$
or $I_1 \cup \cdots \cup I_s \cup \{n\} \in B$,
then a similar argument leads to a contradiction.
Suppose $I_1 \cup \cdots \cup I_s \cup \{n+1\} \in B$.
Applying Theorem \ref{ys:theorem3} (2)
to $I_1 \cup \cdots \cup I_s \cup \{n+1\}$ and $I \cap J$,
we have $S=(I_1 \cup \cdots \cup I_s \cup \{n+1\}) \cup (I \cap J)=J$, a contradiction.
Suppose that $\{n, n+1\} \in B$ or $I_1 \cup \cdots \cup I_s \cup \{n, n+1\} \in B$.
Then there exists a maximal element $L$ of $B'$ containing it.
Theorem \ref{ys:theorem3} (2) implies $I \cap L, J \cap L \in B$.
Since $n \geq 3$, we have $(I \cap L) \cap (J \cap L)=(I \cap J) \cap L \ne \emptyset$
and thus Theorem \ref{ys:theorem3} (2) implies
$S=(I \cap L) \cup (J \cap L) \subset L$, a contradiction.
Therefore $N \cup \{K_1, \{n+1\}\}$ is a nested set.
\end{proof}

Since $e_{K_1}+e_{K_2}+e_{K_3}+\cdots+e_{K_r}+e_{\{n\}}-e_I=0$ and $e_I+e_{\{n+1\}}=0$,
the surface $V(\mathbb{R}_{\geq0}N)$ is isomorphic to a Hirzebruch surface of degree one
and $C_\mathrm{neg}=V(\mathbb{R}_{\geq0}(N \cup \{I\}))$.
We choose $C_\mathrm{fib}=V(\mathbb{R}_{\geq0}(N \cup \{K_1\}))$.
Since
\begin{align*}
I_{C_\mathrm{neg}/X(\Delta(B))}&=X_{K_1}+X_{K_2}+X_{K_3}+\cdots+X_{K_r}+X_{\{n\}}-X_I,\\
I_{C_\mathrm{fib}/X(\Delta(B))}&=X_I+X_{\{n+1\}},\\
I_{V(\mathbb{R}_{\geq0}N)/X(\Delta(B))}
&=(X_I+X_{\{n+1\}})^2\\
&+2(X_I+X_{\{n+1\}})(X_{K_1}+\cdots+X_{K_r}+X_{\{n\}}-X_I),
\end{align*}
we have
$(\mathrm{ch}_2(X(\Delta(B))).V(\mathbb{R}_{\geq0}N))=(1+1-2)/2=0$.

{\it Case 2}. Suppose that there exists a maximal element $I$ of $B'$ such that $|I| \leq n-1$.
Let $K_1, \ldots, K_r$ be all maximal elements of $B|_I \setminus \{I\}$
and $L_1, \ldots, L_s$ be all maximal elements of
$B|_{S \setminus I} \setminus \{S \setminus I\}$.
Note that $r, s \geq 2$.
Since $X(\Delta(B))$ is Fano,
Theorem \ref{ys:theorem3} (2) implies that $I$ is the disjoint union of $K_1, \ldots, K_r$
and $S \setminus I$ is the disjoint union of $L_1, \ldots, L_s$.
We pick $N_i \in \mathcal{N}(B|_{K_i})_\mathrm{max}$ for each $i=1, \ldots, r$ and
$N'_j \in \mathcal{N}(B|_{L_j})_\mathrm{max}$ for each $j=1, \ldots, s$.
Let
\[
N=N_1 \cup \cdots \cup N_r \cup N'_1 \cup \cdots \cup N'_s
\cup \{K_3, \ldots, K_r, L_3, \ldots, L_s, I\}.
\]

\begin{cl}
The families
$N \cup \{K_1, L_1\}, N \cup \{L_1, K_2\}, N \cup \{K_2, L_2\}, N \cup \{L_2, K_1\}$
are maximal nested sets of $B$.
\end{cl}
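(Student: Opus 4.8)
The plan is to verify that each of the four families is a nested set of cardinality $n$; \emph{maximality} is then automatic, since by Proposition \ref{ys:pure} every maximal nested set of the connected building set $B$ on $S$ has cardinality $|S|-1=n$, so a nested set of cardinality $n$ cannot be properly contained in a larger one. To get the cardinality I would apply Proposition \ref{ys:pure} to each connected restriction $B|_{K_i}$ and $B|_{L_j}$, giving $|N_i|=|K_i|-1$ and $|N'_j|=|L_j|-1$; since $K_1,\ldots,K_r$ is a disjoint union decomposition of $I$ and $L_1,\ldots,L_s$ one of $S\setminus I$, summation yields
\[
|N|=(|I|-r)+(|S\setminus I|-s)+(r-2)+(s-2)+1=(n+1)-3=n-2.
\]
Each family adjoins two further elements to $N$, and these two are distinct (opposite sides) and not already in $N$, so each family has cardinality exactly $n$. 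By the symmetry interchanging $K_1\leftrightarrow K_2$ and $L_1\leftrightarrow L_2$, it suffices to treat $N\cup\{K_1,L_1\}$.

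Every element of $N\cup\{K_1,L_1\}$ is contained either in $I$ (namely $I$ itself, $K_1,K_3,\ldots,K_r$, and the members of $N_1,\ldots,N_r$) or in $S\setminus I$ (namely $L_1,L_3,\ldots,L_s$ and the members of $N'_1,\ldots,N'_s$). The two groups are mutually disjoint, and within each group every pair is nested or disjoint by construction ($I$ contains the whole $I$-side; the $K$'s and $L$'s are disjoint pieces; the $N_i,N'_j$ are nested sets of restrictions). Hence condition (1) of Definition \ref{ys:nested} holds, and I would dispatch it in one line.

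The substance is condition (2). The plan is to take pairwise disjoint $P_1,\ldots,P_k$ with $k\geq 2$ in the family and $U=P_1\cup\cdots\cup P_k\in B$, and split them according to whether they lie in $I$ or in $S\setminus I$. If all $P_i\subseteq I$, then $I$ cannot occur among them (it meets every other subset of $I$), so they lie in $N_1\cup\cdots\cup N_r\cup\{K_1,K_3,\ldots,K_r\}$, and the argument already used for Claim 1 applies verbatim: the absence of $K_2$ forces $U\subsetneq I$, and then $U\in B$ with $k\geq 2$ contradicts either the maximality of some $K_i$ in $B|_I\setminus\{I\}$ or the nestedness of some $N_i$. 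The case of all $P_i\subseteq S\setminus I$ is symmetric, using the absence of $L_2$ and the nestedness of the $N'_j$.

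The genuinely new case, which I expect to be the main obstacle, is the mixed one, where $U$ meets both $I$ and $S\setminus I$. Here the key idea, not needed for Claims 1 and 2, is to invoke the maximality of $I$ in $B'$. Since $U\cap I\ne\emptyset$ and $I,U\in B$, the first defining property of a building set gives $U\cup I=I\cup(U\cap(S\setminus I))\in B$. The part $U\cap(S\setminus I)$ is a disjoint union of members of $N'_1\cup\cdots\cup N'_s\cup\{L_1,L_3,\ldots,L_s\}$, so by the same mechanism as above — the missing $L_2$ cannot be covered without contradicting the nestedness of $N'_2$ — it is a \emph{proper} subset of $S\setminus I$, and it is nonempty. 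Therefore $I\subsetneq U\cup I\subsetneq S$, so $U\cup I$ is a non-singleton element of $B$ different from $S$, i.e.\ $U\cup I\in B'$ and $U\cup I\supsetneq I$, contradicting the maximality of $I$. This rules out the mixed case and finishes condition (2). The delicate point to get right is precisely the strict inclusion $U\cap(S\setminus I)\subsetneq S\setminus I$, which must be established by the one-sided covering argument rather than taken for granted; once it is in hand, the maximality of $I$ delivers the contradiction immediately.
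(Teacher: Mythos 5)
Your proposal is correct and takes essentially the same route as the paper's proof: reduce by symmetry to $N \cup \{K_1, L_1\}$, obtain maximality from the cardinality count via Proposition \ref{ys:pure}, and check condition (2) by splitting a pairwise disjoint family according to the sides $I$ and $S \setminus I$, where the two one-sided cases reuse the Claim 1 mechanism and the mixed case forms $U \cup I \in B$ and contradicts the maximality of $I$ in $B'$ after noting that the $(S \setminus I)$-part is a nonempty proper subset of $S \setminus I$. The only differences are presentational: you spell out the cardinality computation and the strict-inclusion argument that the paper leaves implicit.
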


\begin{proof}
It suffices to show that $N \cup \{K_1, L_1\}$
satisfies condition (2) of Definition \ref{ys:nested}.
Suppose that $I_1, \ldots, I_t \in N_1 \cup \cdots \cup N_r \cup \{K_1, K_3, \ldots, K_r, I\}$
and $J_1, \ldots, J_u \in N'_1 \cup \cdots \cup N'_s \cup \{L_1, L_3, \ldots, L_s\}$
such that $I_1, \ldots, I_t, J_1, \ldots, J_u$ are pairwise disjoint.
Note that $J_1 \cup \cdots \cup J_u \subsetneq S \setminus I$.

If $t \geq 2$ and $I_1 \cup \cdots \cup I_t \in B$,
then an argument similar to the proof of Claim 1 leads to a contradiction.
If $I_1 \cup \cdots \cup I_t \cup J_1 \cup \cdots \cup J_u \in B$,
then we have $I \subsetneq I \cup J_1 \cup \cdots \cup J_u \in B'$,
which contradicts that $I$ is a maximal element of $B'$.
Suppose that $u \geq 2$ and $J_1 \cup \cdots \cup J_u \in B$.
If $L_j=J_k$ for some $j, k$,
then $L_j \subsetneq J_1 \cup \cdots \cup J_u
\in B|_{S \setminus I} \setminus \{S \setminus I\}$,
a contradiction.
If $J_1, \ldots, J_u \in N'_1 \cup \cdots \cup N'_s$,
then $L_j \supset J_1$ for some $j$
and thus $L_j \subsetneq J_1 \cup \cdots \cup J_u \cup L_j
\in B|_{S \setminus I} \setminus \{S \setminus I\}$,
a contradiction.
Therefore $N \cup \{K_1, L_1\}$ is a nested set.
\end{proof}

Since $e_{K_1}+e_{K_2}+e_{K_3}+\cdots+e_{K_r}-e_I=0$
and $e_{L_1}+e_{L_2}+e_{L_3}+\cdots+e_{L_s}+e_I=0$,
the surface $V(\mathbb{R}_{\geq0}N)$ is isomorphic to $\mathbb{P}^1 \times \mathbb{P}^1$.
We choose $C_\mathrm{neg}=V(\mathbb{R}_{\geq0}(N \cup \{L_1\}))$
and $C_\mathrm{fib}=V(\mathbb{R}_{\geq0}(N \cup \{K_1\}))$. Since
\begin{align*}
I_{C_\mathrm{neg}/X(\Delta(B))}&=X_{K_1}+X_{K_2}+X_{K_3}+\cdots+X_{K_r}-X_I,\\
I_{C_\mathrm{fib}/X(\Delta(B))}&=X_{L_1}+X_{L_2}+X_{L_3}+\cdots+X_{L_s}+X_I,\\
I_{V(\mathbb{R}_{\geq0}N)/X(\Delta(B))}
&=2(X_{L_1}+\cdots+X_{L_s}+X_I)(X_{K_1}+\cdots+X_{K_r}-X_I),
\end{align*}
we have
$(\mathrm{ch}_2(X(\Delta(B))).V(\mathbb{R}_{\geq0}N))=(-2)/2=-1$.

In every case, $\mathrm{ch}_2(X(\Delta(B)))$ is not ample.
This completes the proof of the theorem.
\end{proof}

\section{Related topics}

\subsection{Toric Fano varieties associated to graph cubeahedra}

Devadoss--Heath--Vipismakul \cite{deva11} introduced {\it graph cubeahedra}
for finite simple graphs
and proved that graph associahedra and graph cubeahedra
appear as some compactified moduli spaces of marked bordered Riemann surfaces.
A graph cubeahedron can also be realized as a smooth polytope
and thus we can obtain a nonsingular projective toric variety.
Let $G$ be a finite simple graph on $V(G)=\{1, \ldots, n\}$,
and let $\Box^n$ be an $n$-cube whose facets are labeled by
$1, \ldots, n$ and $\overline{1}, \ldots, \overline{n}$,
where the two facets labeled by $i$ and $\overline{i}$ are on opposite sides.
Then every face of $\Box^n$ is labeled by a subset
$I$ of $\{1, \ldots, n, \overline{1}, \ldots, \overline{n}\}$ such that
$I \cap \{1, \ldots, n\}$ and $\{i \in \{1, \ldots, n\} \mid \overline{i} \in I\}$ are disjoint,
that is, the face corresponding to $I$ is the intersection of the facets
labeled by the elements of $I$.
The {\it graph cubeahedron} $\Box_G$ is obtained from $\Box^n$
by truncating the faces labeled by the elements of the graphical building set $B(G)$
in increasing order of dimension.
We have a bijection between the set of facets of $\Box_G$
and $B(G) \cup \{\{\overline{1}\}, \ldots, \{\overline{n}\}\}$.
For $I \in B(G) \cup \{\{\overline{1}\}, \ldots, \{\overline{n}\}\}$,
we denote by $F_I$ the corresponding facet.

\begin{theorem}[{\cite[Theorem 12]{deva11}}]
Let $G$ be a finite simple graph.
Then the two facets $F_I$ and $F_J$ of the graph cubeahedron $\Box_G$ intersect
if and only if one of the following holds:
\begin{enumerate}
\item $I, J \in B(G)$ and we have either
$I \subset J$ or $J \subset I$ or $I \cup J \notin B(G)$.
\item One of $I$ and $J$, say $I$, is in $B(G)$ and $J=\{\overline{j}\}$
for some $j \in \{1, \ldots, n\} \setminus I$.
\item $I=\{\overline{i}\}$ and $J=\{\overline{j}\}$ for some $i, j \in \{1, \ldots, n\}$.
\end{enumerate}
Furthermore, $\Box_G$ is a flag polytope.
\end{theorem}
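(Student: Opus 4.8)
The plan is to pass to normal fans, where the truncation construction becomes a sequence of star subdivisions and the facet-intersection question becomes a question about which rays span a common cone. Writing $e_{\{\overline{j}\}}:=-e_j$, the normal fan $\Sigma_G$ of $\Box_G$ lives in $\mathbb{R}^n$ and has one ray $\mathbb{R}_{\geq0}e_I$ for each $I\in B(G)$ together with $\mathbb{R}_{\geq0}(-e_j)$ for $j=1,\dots,n$; this matches the facet labelling, with $F_P$ corresponding to $\mathbb{R}_{\geq0}e_P$. The normal fan of $\Box^n$ is the fan of $(\mathbb{P}^1)^n$, whose rays are $\pm e_1,\dots,\pm e_n$ and whose maximal cones are the $2^n$ orthants. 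The face of $\Box^n$ labelled by $I\subset\{1,\dots,n\}$ is normal to the cone $\sigma_I=\sum_{i\in I}\mathbb{R}_{\geq0}e_i$, and $e_I$ lies in its relative interior; hence truncating that face is the star subdivision of $\sigma_I$ along the ray $e_I$. Truncating in increasing order of dimension thus amounts to star-subdividing at the rays $e_I$ (for $I\in B(G)$, $|I|\geq2$) in order of decreasing $|I|$. Since $\Box_G$ is simple, $\Sigma_G$ is a complete simplicial fan, and a collection of facets $F_P$ has nonempty intersection if and only if the rays $\mathbb{R}_{\geq0}e_P$ span a cone of $\Sigma_G$; in particular two facets meet if and only if the two rays span a $2$-dimensional cone.

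Next I would record the effect of one star subdivision of a simplicial fan at a primitive vector $v$ in the relative interior of a cone $\sigma_0$: (i) two old rays $\rho,\rho'$ that spanned a cone continue to span one unless $\mathrm{cone}(\rho,\rho')=\sigma_0$, which for $\sigma_0=\sigma_I$ forces $|I|=2$ and $\{\rho,\rho'\}=\{e_i,e_j\}$ with $I=\{i,j\}$; and (ii) the new ray $\mathbb{R}_{\geq0}v$ spans a cone with an old ray $\rho$ precisely when $\rho$ is a ray of some cone containing $\sigma_0$. Applying this to the three configurations of a pair $\{P,Q\}$ gives the criterion. If $P=\{\overline i\}$ and $Q=\{\overline j\}$, then $\mathrm{cone}(-e_i,-e_j)$ contains no $\sigma_I$, so it is untouched by every subdivision and survives in $\Sigma_G$: the facets always meet, giving (3). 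If $P=I\in B(G)$ and $Q=\{\overline j\}$, every cone of $\Sigma_G$ containing $-e_j$ lies in $\{x_j\leq0\}$ while $e_I$ has $j$-th coordinate $1$ when $j\in I$, so the facets are disjoint in that case; when $j\notin I$ the cone $\sigma_I+\mathrm{cone}(-e_j)$ contains $\sigma_{I'}$ only for $I'\subseteq I$, so no earlier (larger) step subdivides it, subdividing at $e_I$ creates $\mathrm{cone}(e_I,-e_j)$, and this persists through the remaining steps, giving (2). Finally, if $P=I$ and $Q=J$ both lie in $B(G)$, the question is whether $e_I$ and $e_J$ span a cone of the subdivision of the positive orthant, i.e.\ whether $\{I,J\}$ is nested; for a graphical building set $I\cap J\neq\emptyset$ already forces $I\cup J\in B(G)$, so the nested-set condition reads $I\subset J$ or $J\subset I$ or $I\cup J\notin B(G)$, which is (1).

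The main obstacle is this last, positive--positive, case: one must show that star-subdividing the positive orthant at the rays $e_I$, $I\in B(G)$, in order of decreasing cardinality produces a fan whose cones are indexed exactly by the nested sets of $B(G)$. This is the nested-set (nestohedron) fan structure and is the combinatorial heart of the statement. I would prove it by induction on $|I|$, checking that the decreasing-cardinality order keeps the subdivisions well defined and that a pair $\{I,J\}$ loses its common cone at precisely the step introducing $e_{I\cup J}$ when $I,J$ are incomparable with $I\cup J\in B(G)$, in accordance with the single-subdivision lemma. This is exactly the correspondence between cones of $\Delta(B)$ and nested sets already used in Section 2, now carried out inside the cube fan, so it may alternatively be quoted from the standard theory of nested set complexes; the only new feature is the bookkeeping for the negative rays, which is handled by the separation argument above.

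For flagness, recall that a simple polytope is flag exactly when every family of pairwise intersecting facets has a common point, i.e.\ when $\Sigma_G$ is a flag fan. Given $\mathcal{F}\subset B(G)\cup\{\{\overline1\},\dots,\{\overline n\}\}$ that is pairwise intersecting, put $\mathcal{B}=\mathcal{F}\cap B(G)$ and $D=\{j\mid\{\overline j\}\in\mathcal{F}\}$. By cases (2) and (3), every $j\in D$ avoids every member of $\mathcal{B}$ and the negative rays jointly span the untouched cone $\mathrm{cone}(-e_j\mid j\in D)$, so it remains to see that $\mathcal{B}$ is a nested set. By case (1) the members of $\mathcal{B}$ are pairwise nested, and here the graphical hypothesis is essential: if $I_1,\dots,I_k\in\mathcal{B}$ are pairwise disjoint with $k\geq2$, then pairwise $I_a\cup I_b\notin B(G)$ means $G$ has no edge between $I_a$ and $I_b$, whence $G|_{I_1\cup\cdots\cup I_k}$ is disconnected and $I_1\cup\cdots\cup I_k\notin B(G)$; thus condition (2) of Definition \ref{ys:nested} holds and $\mathcal{B}$ is nested. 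Therefore the rays of $\mathcal{F}$ span a cone, the facets meet, and $\Box_G$ is flag. I expect the nested-set fan structure of the previous paragraph to be the only substantial work; once it is in place, both the pairwise criterion and flagness follow from the reduction of graph connectivity to edges.
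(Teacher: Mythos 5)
You should first note that the paper itself contains no proof of this statement: it is quoted verbatim from \cite[Theorem 12]{deva11}, so your attempt can only be judged on its own merits, not against an internal argument. Your framework (pass to the normal fan, interpret each truncation as a star subdivision at $e_I$ in decreasing order of $|I|$, reduce facet intersection to two rays spanning a cone) is the natural one, and your treatment of the cases involving the negative rays is complete and correct: the separation argument for $P=I$, $Q=\{\overline{j}\}$ with $j\in I$, the creation and persistence of $\mathrm{cone}(e_I,-e_j)$ when $j\notin I$, and the untouched cone $\mathrm{cone}(-e_i,-e_j)$ are all sound, as is your single-subdivision lemma.

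However, there is a genuine gap exactly where you locate the ``combinatorial heart.'' Your proposed induction for the positive--positive case rests on a mechanism that is wrong as stated: you say a pair $\{I,J\}$ ``loses its common cone at precisely the step introducing $e_{I\cup J}$,'' but when $I$ and $J$ are incomparable with $I\cup J\in B(G)$ we have $|I\cup J|>\max(|I|,|J|)$, so in the decreasing-cardinality order the ray $e_{I\cup J}$ is introduced \emph{before} the rays $e_I$ and $e_J$ exist; the cone $\mathrm{cone}(e_I,e_J)$ is never present to be lost, and what must be proved is that it is never \emph{created}. By your own lemma (ii), creation at the later of the two steps depends on which higher-dimensional cones contain $\sigma_I$ (resp.\ $\sigma_J$) at that time and what their rays are, so the induction has to control \emph{all} cones of every intermediate fan, not just the $2$-dimensional ones; that full structure theorem is precisely the content of the cited result and is nowhere supplied. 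Nor can it simply be ``quoted from the standard theory of nested set complexes'': $\Sigma_G$ is not $\Delta(B)$ for any building set --- it has the $n$ negative rays $-e_1,\ldots,-e_n$ rather than the single ray $e_{n+1}=-e_1-\cdots-e_n$, and its positive part contains the ray $e_{V(G)}$, which is the zero vector in the $\Delta(B(G))$ picture (indeed nested sets in Definition \ref{ys:nested} exclude $B_{\mathrm{max}}$, while $V(G)$ labels a genuine facet of $\Box_G$); the paper's remark on Manneville--Pilaud's theorem confirms that graph cubeahedra are generally not graph associahedra. The same missing structure theorem also undermines your flagness argument: you correctly show that a pairwise intersecting family yields a nested $\mathcal{B}$ and a compatible $D$, but the concluding step ``therefore the rays of $\mathcal{F}$ span a cone of $\Sigma_G$'' is exactly the non-pairwise half of the structure theorem, which is assumed rather than proved.
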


We can realize $\Box_G$ as a smooth polytope
such that the outward-pointing primitive normal vector $e_I$ of the facet $F_I$ is
\[
\left\{\begin{array}{ll}
\sum_{i \in I}e_i & (I \in B(G)), \\
-e_i & (I=\{\overline{i}\}, i \in \{1, \ldots, n\}), \end{array}\right.
\]
for any $I \in B(G) \cup \{\{\overline{1}\}, \ldots, \{\overline{n}\}\}$.
Let
\[
\mathcal{N}^\Box(G)=\{N
\subset B(G) \cup \{\{\overline{1}\}, \ldots, \{\overline{n}\}\} \mid
F_I \mbox{ and } F_J \mbox{ intersect for any } I, J \in N\}.
\]
For $N \in \mathcal{N}^\Box(G) \setminus \{\emptyset\}$,
we denote by $\sigma_N$ the $|N|$-dimensional cone
$\sum_{I \in N}\mathbb{R}_{\geq 0}e_I$ in $\mathbb{R}^n$,
and we define $\sigma_\emptyset$ to be $\{0\} \subset \mathbb{R}^n$.
Then $\{\sigma_N \mid N \in \mathcal{N}^\Box(G)\}$
coincides with the normal fan $\Delta_{\Box_G}$.
Note that if $G_1, \ldots, G_m$ are the connected components of $G$,
then $X(\Delta_{\Box_G})$ is isomorphic to the product
$X(\Delta_{\Box_{G_1}}) \times \cdots \times X(\Delta_{\Box_{G_m}})$.

\begin{remark}
Manneville--Pilaud proved
that for connected graphs $G$ and $G'$,
the graph associahedron of $G$ and the graph cubeahedron of $G'$
are combinatorially equivalent
if and only if $G$ is a tree with at most one node of degree more than two
and $G'$ is its line graph (see \cite[Proposition 64]{mann17}).
Hence we can see that there exist many toric varieties associated to graph cubeahedra
that cannot be obtained from graph associahedra.
\end{remark}

Recently, the author proved the following:

\begin{theorem}[{\cite[Theorem 5]{suya18}}]\label{ys:theorem7}
Let $G$ be a finite simple graph. Then the following are equivalent:
\begin{enumerate}
\item The nonsingular projective toric variety $X(\Delta_{\Box_G})$
associated to the graph cubeahedron $\Box_G$ is Fano.
\item Each connected component of $G$ has at most two nodes.
\end{enumerate}
In particular, any toric Fano variety associated to a graph cubeahedron
is a product of copies of $\mathbb{P}^1$
and $\mathbb{P}^1 \times \mathbb{P}^1$ blown-up at one point.
\end{theorem}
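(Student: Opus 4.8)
The plan is to apply the Nakai-type criterion of Proposition~\ref{ys:Nakai}(1): $X(\Delta_{\Box_G})$ is Fano if and only if $(-K.V(\tau))>0$ for every wall $\tau\in\Delta_{\Box_G}(n-1)$, where a wall corresponds to a nested set $N\in\mathcal{N}^\Box(G)$ with $|N|=n-1$ and the intersection number is extracted from the associated wall relation by Proposition~\ref{ys:intersection number}. Since $X(\Delta_{\Box_G})\cong\prod_i X(\Delta_{\Box_{G_i}})$ over the connected components $G_i$, and a product of nonsingular projective varieties is Fano if and only if each factor is, the implication (2)$\Rightarrow$(1) and the concluding description reduce to connected building blocks. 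A connected $G$ on one node gives $X(\Delta_{\Box_G})\cong\mathbb{P}^1$, while a connected $G$ on two nodes (a single edge) has ray generators $e_1,e_2,e_1+e_2,-e_1,-e_2$, which define $\mathbb{P}^1\times\mathbb{P}^1$ blown up at one point; both are Fano, and since a product of Fano varieties is Fano this settles (2)$\Rightarrow$(1) together with the final classification.

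For (1)$\Rightarrow$(2) I would argue by contraposition, producing a single bad wall whenever (2) fails. Condition (2) fails exactly when some node has degree at least two, so after relabelling I may assume node $1$ is adjacent to nodes $2$ and $3$; then $\{1,2\},\{1,3\},\{1,2,3\}\in B(G)$. The central construction is the subset
\[
N=\{\{1\},\{1,2,3\}\}\cup\{\{\overline{j}\}\mid 4\le j\le n\},
\]
whose $n-1$ normal vectors $e_1,\;e_1+e_2+e_3,\;-e_4,\dots,-e_n$ are linearly independent, so $\sigma_N\in\Delta_{\Box_G}(n-1)$. Using the facet-intersection criterion \cite[Theorem 12]{deva11} one checks that $N$ lies in $\mathcal{N}^\Box(G)$ and that both $N\cup\{\{1,2\}\}$ and $N\cup\{\{1,3\}\}$ are maximal nested sets: every pair involving a barred singleton $\{\overline{j}\}$ meets automatically, since this is exactly what the criterion guarantees for pairs of the form $\{F_I,F_{\{\overline{j}\}}\}$ and $\{F_{\{\overline{i}\}},F_{\{\overline{j}\}}\}$, so only the chain relations among $\{1\},\{1,2\},\{1,3\},\{1,2,3\}$ remain to be verified, and these are immediate.

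These two maximal cones share the wall $\sigma_N$, so Proposition~\ref{ys:intersection number} applies to the relation
\[
e_{\{1,2\}}+e_{\{1,3\}}-e_{\{1,2,3\}}-e_{\{1\}}=0
\]
(in which the generators $-e_4,\dots,-e_n$ occur with coefficient zero), giving $(-K_{X(\Delta_{\Box_G})}.V(\sigma_N))=2+(-1)+(-1)=0$, which is not positive; hence $X(\Delta_{\Box_G})$ is not Fano. I expect the main difficulty to be arranging a bad wall \emph{uniformly} across all connected graphs on at least three nodes, since a priori the remaining vertices and edges could obstruct maximality of the nested set. The resolution is the observation that padding with the barred singletons $\{\overline{4}\},\dots,\{\overline{n}\}$ decouples the rest of the graph and localises the failure of positivity to the triangle supported on $\{1,2,3\}$; once this wall is chosen, every remaining step is a routine application of \cite[Theorem 12]{deva11} and Proposition~\ref{ys:intersection number}.
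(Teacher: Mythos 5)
Your proposal is correct: the product reduction over connected components for (2)$\Rightarrow$(1), together with the wall $\sigma_N$ built from $\{\{1\},\{1,2,3\},\{\overline{4}\},\dots,\{\overline{n}\}\}$ and the relation $e_{\{1,2\}}+e_{\{1,3\}}-e_{\{1\}}-e_{\{1,2,3\}}=0$ giving $(-K_{X(\Delta_{\Box_G})}.V(\sigma_N))=0$, is a valid application of Propositions \ref{ys:intersection number} and \ref{ys:Nakai}. The paper itself contains no proof of this theorem (it is quoted from \cite{suya18}), but your argument uses exactly the machinery the paper employs for all such characterizations, so it is essentially the same approach.
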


\begin{theorem}[{\cite[Theorem 6]{suya18}}]\label{ys:theorem8}
Let $G$ be a finite simple graph. Then the following are equivalent:
\begin{enumerate}
\item The toric variety $X(\Delta_{\Box_G})$ is weak Fano.
\item For any subset $I$ of $V(G)$,
the induced subgraph $G|_I$ is not isomorphic to any of the following:
\begin{enumerate}
\item A cycle with $\geq 4$ nodes.
\item A diamond graph.
\item A claw, that is, a star with three edges.
\end{enumerate}
\end{enumerate}
In particular, if $X(\Delta_{\Box_G})$ is weak Fano, then $G$ is chordal.
\end{theorem}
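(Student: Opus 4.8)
The plan is to apply Proposition~\ref{ys:Nakai}~(2) and reduce the weak Fano property of $X(\Delta_{\Box_G})$ to the nonnegativity of the intersection numbers $(-K_{X(\Delta_{\Box_G})}.V(\sigma_N))$ over all walls, i.e. over all $N \in \mathcal{N}^\Box(G)$ with $|N|=n-1$. Since $X(\Delta_{\Box_G})$ factors as a product over the connected components of $G$, and a product of nonsingular projective varieties is weak Fano if and only if every factor is, while any induced cycle, diamond, or claw is connected and hence lies inside a single component, I may assume $G$ is connected. For a fixed wall $N$, let $I_1, I_2 \in B(G) \cup \{\{\overline{1}\}, \ldots, \{\overline{n}\}\}$ be the two elements for which $\sigma_{N \cup \{I_1\}}$ and $\sigma_{N \cup \{I_2\}}$ are the two maximal cones containing $\sigma_N$. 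By Proposition~\ref{ys:intersection number} the intersection number equals $2 + \sum_{J \in N} a_J$, where $e_{I_1} + e_{I_2} + \sum_{J \in N} a_J e_J = 0$ is the wall relation. Thus the entire theorem reduces to computing these wall relations and determining when $2 + \sum_{J \in N} a_J \geq 0$ holds for every wall.

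The key step is a cubeahedral analogue of Proposition~\ref{ys:pair} describing the flip pair $(I_1, I_2)$ relative to $N$. Here the new feature, absent in the graph-associahedron setting of Theorem~\ref{ys:theorem2}, is the presence of the complemented facets $\{\overline{j}\}$ with normal vectors $-e_j$, and the compatibility rules of Theorem~\ref{ys:theorem6} together with the flag property of $\Box_G$ must be used to enumerate the admissible flips. I would split into three cases: (i) $I_1, I_2 \in B(G)$; (ii) $I_1 \in B(G)$ and $I_2 = \{\overline{j}\}$; (iii) $I_1 = \{\overline{i}\}$ and $I_2 = \{\overline{j}\}$. In each case one solves $e_{I_1}+e_{I_2} = -\sum_{J \in N} a_J e_J$ by writing the left-hand side in the standard basis and matching coordinates against the elements of $N$, exactly as in the derivation following Proposition~\ref{ys:pair}.

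From these wall relations I would read off the local obstructions to nonnegativity. Case (i) should reproduce, by essentially the building-set computation, the cycle-$\geq 4$ and diamond obstructions already appearing in Theorem~\ref{ys:theorem2}, whereas cases (ii) and (iii), which genuinely involve the $-e_j$ normals, are responsible for the new claw obstruction. Concretely, for the necessity of condition~(2) I would, given an induced subgraph $G|_I$ isomorphic to a cycle with $\geq 4$ nodes, to a diamond, or to a claw, exhibit an explicit wall $N$ whose intersection number is negative, constructing $N$ from a maximal nested set of the restriction to $I$ together with the appropriate complemented singletons. For sufficiency I would show that if none of these three induced subgraphs occurs, then every wall relation forces $\sum_{J \in N} a_J \geq -2$. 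The final assertion then follows at once: a finite simple graph is chordal precisely when it has no induced cycle of length $\geq 4$, and condition~(2)(a) forbids exactly such cycles, so weak Fano implies chordal.

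The main obstacle is the new case analysis for flips involving the complemented facets $\{\overline{j}\}$. Unlike the connected building-set computation, where the relation reduces cleanly via Proposition~\ref{ys:pair}~(3), the mixed relations in cases (ii) and (iii) must be controlled with some care, and it is precisely there that one must verify that the claw is the only additional forbidden pattern and that no further obstruction is hidden among the many possible flip types.
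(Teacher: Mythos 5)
A preliminary remark: this survey does not contain a proof of Theorem~\ref{ys:theorem8} at all --- it is quoted from \cite{suya18} --- so there is no in-paper argument to compare against; your proposal can only be judged on its own. Its overall strategy is the natural one and matches the machinery of Section 2 (reduce to connected $G$, apply Proposition~\ref{ys:Nakai}(2), and evaluate $(-K_{X(\Delta_{\Box_G})}.V(\sigma_N))$ from the wall relation $e_{I_1}+e_{I_2}+\sum_{J\in N}a_Je_J=0$ via Proposition~\ref{ys:intersection number}). But as written it has a genuine gap: the entire proof hinges on the ``cubeahedral analogue of Proposition~\ref{ys:pair},'' which you invoke but never state, let alone prove. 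That lemma --- a description of which pairs $(I_1,I_2)$ can flank a wall $\sigma_N$ of $\Delta_{\Box_G}$ and what the coefficients $a_J$ are in each of your cases (i)--(iii) --- is the actual mathematical content here; without it, neither the necessity direction (exhibiting an explicit wall of negative degree for each of the cycle, diamond, and claw) nor the sufficiency direction (verifying $\sum_{J\in N}a_J\geq -2$ for every wall when no forbidden subgraph occurs) is carried out. Your closing paragraph concedes exactly this, so what you have is a program, not a proof. (Also, the facet-compatibility rules you appeal to are in the unnumbered Devadoss--Heath--Vipismakul theorem of Section 6, not in Theorem~\ref{ys:theorem6}, which is the 2-Fano result.)

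Moreover, one of the few concrete claims you do make is off. You assert that case (i) ($I_1,I_2\in B(G)$) ``reproduces, by essentially the building-set computation,'' the cycle and diamond obstructions of Theorem~\ref{ys:theorem2}, with the claw being the only new phenomenon. But Theorem~\ref{ys:theorem2} forbids cycles and diamonds only as induced subgraphs on \emph{proper} subsets of a component --- indeed a cycle $G$ itself gives a weak Fano $X(\Delta(B(G)))$ --- whereas Theorem~\ref{ys:theorem8} forbids them on \emph{all} subsets $I\subset V(G)$, including $I=V(G)$. This discrepancy is structural: in $\Delta(B(G))$ one has $e_{V(G)}=0$ and $V(G)$ is not a ray, while in $\Delta_{\Box_G}$ the set $V(G)$ contributes an honest ray generator $e_{V(G)}\neq 0$, so wall relations with $I_1\cup I_2=V(G)$ have genuinely different coefficients than in the associahedron setting. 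Consequently the negative curves ruling out a full cycle of length $\geq 4$ or a full diamond are new and cannot be recycled from Theorem~\ref{ys:theorem2}; any sufficiency argument that treats the $B(G)$-only walls as ``the same as before'' would miss them and would be incorrect. A complete proof must redo the case analysis from scratch in the cubeahedral fan, which is precisely the part left undone.
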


\begin{table}[htbp]
\begin{center}
\begin{tabular}{|c|c|c|}
\hline
\# of nodes & \# of connected & \# of connected graphs whose graph cubeahedra \\
& graphs & correspond to toric weak Fano varieties \\
\hline
\hline
1 & \hphantom{00}1 & \hphantom{0}1 \\
\hline
2 & \hphantom{00}1 & \hphantom{0}1 \\
\hline
3 & \hphantom{00}2 & \hphantom{0}2 \\
\hline
4 & \hphantom{00}6 & \hphantom{0}3 \\
\hline
5 & \hphantom{0}21 & \hphantom{0}6 \\
\hline
6 & 112 & 11 \\
\hline
\end{tabular}
\caption{the number of connected graphs
whose graph cubeahedra correspond to toric weak Fano varieties.}
\label{ys:tbl3}
\end{center}
\end{table}

\begin{remark}
A connected graph satisfies condition (2) of Theorem \ref{ys:theorem8}
if and only if it is the line graph of a tree (see, for example \cite[Theorem 8.5]{hara69}),
and such a graph is called a {\it claw-free block graph}.
Therefore for a connected graph $G$, the toric variety $X(\Delta_{\Box_G})$ is weak Fano
if and only if $G$ is the line graph of a tree.
\end{remark}

\subsection{Toric Fano varieties associated to Weyl chambers}

The collection of Weyl chambers for a root system determines a nonsingular complete fan.
Let $\Phi$ be a root system in a Euclidean space $V$.
Let $M(\Phi)$ be the root lattice $\sum_{\alpha \in \Phi}\mathbb{Z}\alpha$
and $N(\Phi)$ be its dual lattice $\mathrm{Hom}_\mathbb{Z}(M(\Phi), \mathbb{Z})$.
For a set of simple roots $S \subset \Phi$,
we define a cone $\sigma_S=\{v \in N(\Phi)_\mathbb{R} \mid
\langle u, v \rangle \geq 0 \mbox{ for any }u \in S\}$,
where $N(\Phi)_\mathbb{R}=N(\Phi) \otimes_\mathbb{Z} \mathbb{R}$.
The set $\Delta(\Phi)$ of all such cones and their faces
forms a nonsingular complete fan in the lattice $N(\Phi)$.
The associated toric variety $X(\Delta(\Phi))$ is nonsingular and projective \cite{baty11}.

\begin{proposition}\label{ys:Cartan}
Let $S=\{\alpha_1, \ldots, \alpha_n\} \subset \Phi$ be a set of simple roots
and $\omega_1, \ldots, \omega_n$ be the dual basis of $\alpha_1, \ldots, \alpha_n$.
We put $a_{ij}=2(\alpha_i, \alpha_j)/(\alpha_j, \alpha_j)$.
Then we have
$(-K_{X(\Delta(\Phi))}.V(\sum_{i \ne j}\mathbb{R}_{\geq0}\omega_i))=\sum_{i=1}^na_{ij}$
for any $j=1, \ldots, n$.
\end{proposition}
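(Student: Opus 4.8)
The plan is to feed the wall $\tau = \sum_{i \neq j}\mathbb{R}_{\geq0}\omega_i$ into Proposition \ref{ys:intersection number}. Since $\langle \alpha_i, \omega_k\rangle = \delta_{ik}$, the Weyl chamber $\sigma_S$ is exactly the smooth cone $\sum_{k=1}^n \mathbb{R}_{\geq0}\omega_k$, so it is one of the two maximal cones containing $\tau$, and the primitive generator of its extra ray is $v = \omega_j$. The other maximal cone should be the neighbouring Weyl chamber across the facet $\tau$, which lies in the hyperplane $H_{\alpha_j} = \{v \in N(\Phi)_\mathbb{R} \mid \langle \alpha_j, v\rangle = 0\}$. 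By the standard description of adjacent Weyl chambers this neighbour is $s_{\alpha_j}(\sigma_S)$, where $s_{\alpha_j}$ acts on $N(\Phi)_\mathbb{R}$ as the (contragredient) simple reflection; so the task reduces to computing $s_{\alpha_j}$ on the dual basis.

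First I would record the reflection on the root side, $s_{\alpha_j}(\alpha_k) = \alpha_k - a_{kj}\alpha_j$, and combine it with the defining relation $\langle \beta, s_{\alpha_j}(\lambda)\rangle = \langle s_{\alpha_j}(\beta), \lambda\rangle$ of the contragredient action to obtain $\langle \alpha_k, s_{\alpha_j}(\omega_i)\rangle = \delta_{ik} - a_{kj}\delta_{ij}$. This immediately gives $s_{\alpha_j}(\omega_i) = \omega_i$ for $i \neq j$ (so $\tau \subset H_{\alpha_j}$ is fixed, as it must be), while expanding $s_{\alpha_j}(\omega_j)$ in the basis $\omega_1, \ldots, \omega_n$ and using $a_{jj} = 2$ yields
\[
s_{\alpha_j}(\omega_j) = -\omega_j - \sum_{k \neq j}a_{kj}\omega_k.
\]
Thus the primitive generator of the extra ray of the adjacent chamber is $v' = s_{\alpha_j}(\omega_j)$; it is primitive because $s_{\alpha_j}$ is a lattice automorphism of $N(\Phi)$ and the $\omega_k$ form a $\mathbb{Z}$-basis (the cone $\sigma_S$ being smooth).

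It then remains to read off the relation of Proposition \ref{ys:intersection number}. Adding $v$ and $v'$ gives
\[
\omega_j + s_{\alpha_j}(\omega_j) + \sum_{k \neq j}a_{kj}\omega_k = 0,
\]
so the unique integers attached to the wall are $a_k = a_{kj}$ for $k \neq j$, and the intersection number equals $2 + \sum_{k \neq j}a_{kj} = a_{jj} + \sum_{k \neq j}a_{kj} = \sum_{i=1}^n a_{ij}$, using once more $a_{jj} = 2$.

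The \emph{hard part} is purely bookkeeping: one must act by the reflection on the dual lattice $N(\Phi)_\mathbb{R}$ rather than on the root space, identify the correct neighbouring chamber, and in particular keep the column $(a_{kj})_k$ of the Cartan matrix (not the row $(a_{jk})_k$) in the wall relation, since these differ for non–simply-laced $\Phi$. The role of $a_{jj} = 2$ is exactly what promotes the $2 + \sum_{k \neq j}$ of Proposition \ref{ys:intersection number} to the clean full sum $\sum_{i=1}^n a_{ij}$.
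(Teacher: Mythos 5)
Your proof is correct and follows essentially the same route as the paper: both identify the second maximal cone containing the wall $\tau=\sum_{i\ne j}\mathbb{R}_{\geq0}\omega_i$ as the reflected Weyl chamber, compute the contragredient action of $s_{\alpha_j}$ on the dual basis to get $s_{\alpha_j}(\omega_i)=\omega_i$ for $i\ne j$ and $s_{\alpha_j}(\omega_j)=-\omega_j-\sum_{k\ne j}a_{kj}\omega_k$, and then read off the intersection number $2+\sum_{k\ne j}a_{kj}=\sum_{i=1}^n a_{ij}$ from Proposition \ref{ys:intersection number}. The paper phrases the adjacent cone as $\sigma_{s_{\alpha_j}S}$ and computes the dual basis of the reflected simple system $s_{\alpha_j}S$, which is the identical computation to your direct application of the reflection to the generators of $\sigma_S$, including your correct insistence on the column $(a_{kj})_k$ of the Cartan matrix.
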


\begin{proof}
For $\alpha \in \Phi$, we denote by $s_\alpha:V \rightarrow V$
the reflection through the hyperplane associated to $\alpha$,
that is, the map defined by
$s_\alpha(\lambda)=\lambda-(2(\lambda, \alpha)/(\alpha, \alpha))\alpha$.
For $j=1, \ldots, n$, we consider the set of simple roots
$s_{\alpha_j}S=\{s_{\alpha_j}(\alpha_1), \ldots, s_{\alpha_j}(\alpha_n)\}$.
The dual basis of $s_{\alpha_j}(\alpha_1), \ldots, s_{\alpha_j}(\alpha_n)$
is $\omega_1s_{\alpha_j}, \ldots, \omega_ns_{\alpha_j}$.
We see that $\omega_is_{\alpha_j}=\omega_i$ on $M(\Phi)$ for any $i \ne j$.
On the other hand, we have
\[
(\omega_js_{\alpha_j})(\alpha_i)
=\omega_j\left(\alpha_i-\frac{2(\alpha_i, \alpha_j)}{(\alpha_j, \alpha_j)}\alpha_j\right)
=\left\{\begin{array}{ll}
-1 & (i=j), \\
-a_{ij} & (i \ne j). \end{array}\right.
\]
Thus $\omega_js_{\alpha_j}=-\omega_j-\sum_{i \ne j}a_{ij}\omega_i$ on $M(\Phi)$.
Hence we have
\[
\sigma_{s_{\alpha_j}S}=\sum_{i=1}^n\mathbb{R}_{\geq0}(\omega_is_{\alpha_j})
=\sum_{i \ne j}\mathbb{R}_{\geq0}\omega_i
+\mathbb{R}_{\geq0}(-\omega_j-\sum_{i \ne j}a_{ij}\omega_i)
\]
and $\sigma_S \cap \sigma_{s_{\alpha_j}S}=\sum_{i \ne j}\mathbb{R}_{\geq0}\omega_i$.
Since $\omega_j+(-\omega_j-\sum_{i \ne j}a_{ij}\omega_i)+\sum_{i \ne j}a_{ij}\omega_i=0$,
Proposition \ref{ys:intersection number} gives
\[
(-K_{X(\Delta(\Phi))}.V(\sum_{i \ne j}\mathbb{R}_{\geq0}\omega_i))
=2+\sum_{i \ne j}a_{ij}=\sum_{i=1}^na_{ij}.
\]
This completes the proof.
\end{proof}

\begin{example}
The toric variety associated to the root system of type $A_n$ is isomorphic to $X(\Delta(B))$
for the building set $B=2^{\{1, \ldots, n+1\}} \setminus \{\emptyset\}$.
On the other hand, the toric varieties associated to the root systems of type $B$ and $C$
cannot be obtained from building sets since $a_{ij}=-2$ for some $i, j$.
\end{example}

Finally, we characterize root systems
whose associated toric varieties are Fano or weak Fano.

\begin{theorem}\label{ys:theorem9}
Let $\Phi$ be a root system.
Then the associated toric variety $X(\Delta(\Phi))$ is Fano (resp.\ weak Fano)
if and only if each irreducible component of $\Phi$ is of type $A_1$ or $A_2$
(resp. type $A_n$ or $B_n$ for some positive integer $n$).
\end{theorem}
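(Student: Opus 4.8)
The plan is to reduce the Fano and weak Fano conditions to purely numerical statements about the Cartan matrix $a_{ij}=2(\alpha_i,\alpha_j)/(\alpha_j,\alpha_j)$ by invoking Proposition~\ref{ys:Nakai} together with Proposition~\ref{ys:Cartan}. First I would observe that since $X(\Delta(\Phi))$ decomposes as a product over the irreducible components of $\Phi$ (the fan $\Delta(\Phi)$ splits as a direct product of the fans of the components, because the root lattices and Weyl groups split), and a product of nonsingular projective toric varieties is Fano (resp.\ weak Fano) if and only if each factor is, it suffices to treat the case where $\Phi$ is irreducible. This reduction is routine once the product decomposition of the fan is noted.

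For irreducible $\Phi$, the key computation is already supplied: Proposition~\ref{ys:Cartan} tells us that the intersection numbers of $-K_{X(\Delta(\Phi))}$ with the torus-invariant curves $V(\sum_{i\neq j}\mathbb{R}_{\geq0}\omega_i)$ are exactly the column sums $\sum_{i=1}^n a_{ij}$ of the Cartan matrix. Moreover, by the Weyl-group symmetry of the fan $\Delta(\Phi)$, every codimension-one cone is $W$-conjugate to one of the form $\sum_{i\neq j}\mathbb{R}_{\geq0}\omega_i$ for a suitable choice of simple roots, so these column sums account for all the relevant intersection numbers. Thus by Proposition~\ref{ys:Nakai}, $X(\Delta(\Phi))$ is Fano precisely when every column sum $\sum_i a_{ij}$ is strictly positive, and weak Fano precisely when every column sum is nonnegative.

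The final step is to read off, from the classification of irreducible root systems, exactly which Cartan matrices have all column sums positive (resp.\ nonnegative). Here I would use the standard fact that $a_{jj}=2$ and that the off-diagonal entries $a_{ij}$ are nonpositive integers, equal to $0,-1,-2,$ or $-3$, with the pattern dictated by the Dynkin diagram. A node $j$ contributes $a_{jj}=2$ and one negative unit $-1$ for each single-bond neighbor; so for a simply laced diagram the column sum at $j$ equals $2-(\text{degree of }j)$, which is positive only when every node has degree $\leq1$ (forcing $A_1$ or $A_2$) and nonnegative when every node has degree $\leq2$ (forcing a path, i.e.\ type $A_n$). For the multiply laced cases one checks column by column that a double bond contributes a $-2$, so the short-root column in $B_n/C_n$ gives sum $2-1-1=0$ at the branching-into-double node while staying nonnegative throughout, whereas type $C_n$, $F_4$, $G_2$ produce a column with a strictly negative sum. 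The main obstacle is this last case analysis: I expect the delicate point to be verifying that among the multiply laced systems exactly type $B_n$ survives the nonnegativity test while $C_n$, $F_4$, and $G_2$ fail, which requires careful attention to the convention for $a_{ij}$ (it is the column index $j$ that labels the root whose length normalizes the entry) so that the asymmetry between $B_n$ and $C_n$ comes out correctly.
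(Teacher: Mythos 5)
Your proposal is correct and follows essentially the same route as the paper: reduce to irreducible $\Phi$ via the product decomposition, use Proposition~\ref{ys:Cartan} together with transitivity of the Weyl group on chambers to identify all wall intersection numbers with column sums of the Cartan matrix, apply Proposition~\ref{ys:Nakai}, and finish with the case-by-case inspection of Cartan matrices (where, as you rightly note, the convention $a_{ij}=2(\alpha_i,\alpha_j)/(\alpha_j,\alpha_j)$ is exactly what makes $B_n$ pass and $C_n$ fail). The paper phrases your $W$-conjugacy step as: every $\tau\in\Delta(\Phi)(n-1)$ is a face of $\sigma_{S'}$ for some set of simple roots $S'$, which is the same observation in slightly different words.
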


\begin{proof}
Suppose that $\Phi$ is an irreducible root system of rank $n$.
We fix a set of simple roots $S=\{\alpha_1, \ldots, \alpha_n\} \subset \Phi$
and we put $a_{ij}=2(\alpha_i, \alpha_j)/(\alpha_j, \alpha_j)$.
For any $\tau \in \Delta(\Phi)(n-1)$,
there exists a set of simple roots $S' \subset \Phi$
such that $\tau$ is a face of $\sigma_{S'}$.
Proposition \ref{ys:Cartan} gives $(-K_{X(\Delta(\Phi))}.V(\tau))=\sum_{i=1}^na_{ij}$
for some $j=1, \ldots, n$.
Conversely, the proof of Proposition \ref{ys:Cartan} shows that
for any $j=1, \ldots, n$, there exists $\tau \in \Delta(\Phi)(n-1)$
such that $(-K_{X(\Delta(\Phi))}.V(\tau))=\sum_{i=1}^na_{ij}$.
Therefore by Proposition \ref{ys:Nakai}, $X(\Delta(\Phi))$ is Fano (resp.\ weak Fano)
if and only if $\sum_{i=1}^na_{ij}>0$ (resp.\ $\sum_{i=1}^na_{ij} \geq 0$)
for any $j=1, \ldots, n$.
Since $\Phi$ is irreducible and $(a_{ij})$ is the Cartan matrix,
$\sum_{i=1}^na_{ij}>0$ (resp.\ $\sum_{i=1}^na_{ij} \geq 0$)
for any $j=1, \ldots, n$ if and only if $\Phi$ is of type $A_1$ or $A_2$
(resp.\ type $A_n$ or $B_n$ for some positive integer $n$).
The theorem follows from the fact that the direct sum of root systems corresponds
to the product of the associated toric varieties.
\end{proof}

\section*{Acknowledgments}

The author would like to thank the organizers of the conference
``Algebraic and Geometric Combinatorics on Lattice Polytopes",
Takayuki Hibi and Akiyoshi Tsuchiya,
for their invitation and their kind hospitality.
Professor Hidefumi Ohsugi gave him valuable comments.
This work was supported by Grant-in-Aid for JSPS Fellows 18J00022.

\end{document}